\documentclass[12pt,reqno]{amsart}

\usepackage[a4paper,margin=2cm]{geometry}
\usepackage[T1]{fontenc}
\usepackage{amsmath,amssymb,amsthm,mathtools}
\usepackage{enumitem}
\usepackage[hidelinks]{hyperref}

\numberwithin{equation}{section}
\allowdisplaybreaks[2]
\setlist{topsep=3pt,itemsep=1pt,parsep=0pt,partopsep=0pt}
\newcommand{\Sym}{\operatorname{Sym}}

\newtheorem{prop}{Proposition}[section]

\newtheorem{cor}[prop]{Corollary}
\newtheorem{lem}[prop]{Lemma}
\newtheorem{ex}[prop]{Example}
\newtheorem{question}[prop]{Question}
\theoremstyle{definition}
\newtheorem{defn}[prop]{Definition}
\theoremstyle{remark}
\newtheorem{rem}[prop]{Remark}

\begin{document}

\title[Mixed Buchsbaum--Rim Multiplicities and Counterexamples]
{Counterexamples to Minkowski-Type Inequalities and the Rees Converse for Modules via Mixed Buchsbaum--Rim Multiplicities}
\author{V. H. Jorge P\'erez}
\address{Instituto de Ci\^encias Matem\'aticas e de Computa\c{c}\~ao, Universidade de S\~ao Paulo, S\~ao Carlos, SP, Brazil}
\email{vhjperez@icmc.usp.br}
\thanks{This work was partially supported by CNPq, grant 304851/2025-6, and FAPESP, grant 2025/21618-0.}
\subjclass[2020]{Primary 13B22; Secondary 13C13}
\keywords{Buchsbaum--Rim multiplicity, mixed multiplicities, Minkowski inequality, integral closure, reductions}

\begin{abstract}
In this paper, we explore mixed Buchsbaum--Rim multiplicities of finite-colength submodules of free modules over Noetherian local rings. We extend the Hilbert-polynomial construction to submodules in arbitrary homogeneous degrees and prove a product formula for the resulting coefficients. We then examine module analogues of Teissier's coefficient inequalities and the Minkowski inequality. Explicit examples show that the coefficientwise power inequalities, log-convexity, and the product-root inequality can all fail; several failures occur over regular local rings. We also prove that equality of integral closures forces equality of all mixed Buchsbaum--Rim multiplicities, while a one-dimensional example shows that the converse fails for arbitrary modules. A converse is recovered when the module closures are the integral closures of modules of the form $IF$ for $\mathfrak m$-primary ideals $I$. The general converse problem in dimension at least two is stated at the end.
\end{abstract}

\maketitle

\section{Introduction}

\paragraph{Standing notation.}
Throughout, unless stated otherwise, $(R,\mathfrak m)$ is a
Noetherian local ring of dimension $d\geq1$, $F$ is a free
$R$-module of rank $p\geq1$, and
\[
S=\operatorname{Sym}_R(F)=\bigoplus_{n\geq0}S_n,
\qquad
F^n:=S_n,
\qquad
q:=d+p-1.
\]
We also fix a finitely generated $R$-module $N$ with
$\dim_R N=d$. Thus $S\otimes_RN$ is a finitely generated graded
$S$-module of dimension $d+p$. Powers and products of submodules are
formed inside $S$. If such a product acts on an $R$-module, the action
means the image of the corresponding multiplication map; we do not
assume that tensoring preserves injectivity.

We begin with the classical ideal-theoretic setting. Let $I,J$ be
$\mathfrak m$-primary ideals of $R$. For $r,s\gg0$, the function
$\ell_R(R/I^rJ^s)$ is a polynomial of total degree $d$, whose
homogeneous component of degree $d$ is
\[
\sum_{i=0}^{d}
\frac{e(I^{[d-i]},J^{[i]})}{(d-i)!\,i!}\,r^{d-i}s^i.
\]
In particular,
\[
e(I^rJ^s)
=
\sum_{i=0}^{d}\binom di\,
e(I^{[d-i]},J^{[i]})\,r^{d-i}s^i.
\]
Teissier's mixed-multiplicity inequalities state that
\begin{equation}\label{CM1}
e(I^{[d-i]},J^{[i]})^d
\leq e(I)^{d-i}e(J)^i
\qquad(0\leq i\leq d),
\end{equation}
and that the mixed multiplicities form a log-convex sequence:
\begin{equation}\label{CM12}
e(I^{[d-i]},J^{[i]})^2
\leq
e(I^{[d-i+1]},J^{[i-1]})\,
e(I^{[d-i-1]},J^{[i+1]})
\qquad(1\leq i\leq d-1).
\end{equation}
These inequalities yield Minkowski's inequality
\begin{equation}\label{eq:ideal-minkowski}
e(IJ)^{1/d}\leq e(I)^{1/d}+e(J)^{1/d}.
\end{equation}

The equality case provides the main motivation for this work. Suppose
that $(R,\mathfrak m)$ is quasi-unmixed of dimension $d\geq2$, and set
\[
e_i=e(I^{[d-i]},J^{[i]}),\qquad 0\leq i\leq d.
\]
The Teissier--Rees--Sharp--Katz criterion relates equality in
\eqref{eq:ideal-minkowski} to equality of the integral closures of
powers of $I$ and $J$. More precisely, for positive integers $r,s$,
\[
\overline{I^r}=\overline{J^s}
\quad\Longleftrightarrow\quad
\frac{e_i}{e_{i-1}}=\frac{r}{s}
\quad\text{for every }1\leq i\leq d,
\]
under the stated hypotheses; see
\cite{Teissier0,Teissier1,RS,KatzMinkowski,Verma}.
Consequently, if $e_0=e_1=\cdots=e_d=c$, then the ratios
$e_i/e_{i-1}$ are all equal to $1$, and the criterion gives
$\overline{I^r}=\overline{J^r}$ for some $r>0$. Hence
$\overline I=\overline J$.

We now turn to modules. Let $E_1,E_2\subseteq F$ be submodules of
finite colength. For $r,s\geq0$, set
\[
h_{E_1,E_2;N}(r,s)
=
\ell_R\!\left(
(S_{r+s}/E_1^rE_2^s)\otimes_RN
\right).
\]
For $r,s\gg0$, this function agrees with a polynomial of total degree
at most $q$. Its homogeneous component of degree $q$ has the form
\[
\sum_{i=0}^{q}
\frac{
e_{\mathrm{BR}}(E_1^{[q-i]},E_2^{[i]};N)
}{(q-i)!\,i!}\,r^{q-i}s^i,
\]
where the coefficients are the mixed Buchsbaum--Rim multiplicities.
When $N=R$, we omit $N$ from the notation. More generally, we also
consider submodules in arbitrary homogeneous degrees. The theory of
mixed Buchsbaum--Rim multiplicities was developed by Kirby--Rees and
Kleiman--Thorup; see
\cite{Buchsbaum-Rim,Kirby-Rees1,Kleiman-Thorup,Kleiman-Thorup2}.
Further results appear in \cite{Bedregal-Perez}.

This setting suggests several questions. Do the ideal-theoretic
inequalities \eqref{CM1} and \eqref{CM12} extend to submodules of finite
colength? The corresponding power-type inequality would be
\begin{equation}\label{CM3}
e_{\mathrm{BR}}(E_1^{[q-i]},E_2^{[i]})^q
\leq
e_{\mathrm{BR}}(E_1)^{q-i}
e_{\mathrm{BR}}(E_2)^i
\qquad(1\leq i\leq q-1),
\end{equation}
while the log-convexity analogue would be
\begin{equation}\label{CM4}
\begin{aligned}
e_{\mathrm{BR}}(E_1^{[q-i]},E_2^{[i]})^2
\leq{}&
e_{\mathrm{BR}}(E_1^{[q-i+1]},E_2^{[i-1]})\cdot e_{\mathrm{BR}}(E_1^{[q-i-1]},E_2^{[i+1]})
\,\,\,(1\leq i\leq q-1).
\end{aligned}
\end{equation}
One may also ask whether the multiplicity associated with the product
$E_1E_2\subseteq S_2$ satisfies a Minkowski-type inequality. Writing
$\widetilde e_{\mathrm{BR}}(E_1E_2)$ for the multiplicity defined by
the pure length function
$\ell_R(S_{2n}/(E_1E_2)^n)$, this would take the form
\begin{equation}\label{CM5}
\widetilde e_{\mathrm{BR}}(E_1E_2)^{1/q}
\leq
e_{\mathrm{BR}}(E_1)^{1/q}
+
e_{\mathrm{BR}}(E_2)^{1/q}.
\end{equation}

A second question concerns integral closure. Equality
$\overline{E_1}=\overline{E_2}$ implies equality of all mixed
Buchsbaum--Rim multiplicities:
\[
e_{\mathrm{BR}}(E_1^{[q-i]},E_2^{[i]})
=
e_{\mathrm{BR}}(E_1)
=
e_{\mathrm{BR}}(E_2)
\qquad(0\leq i\leq q).
\]
The natural question is the following.

\begin{question}\label{converse}
Assume that $R$ is quasi-unmixed with infinite residue field, and let
$E_1,E_2\subseteq F$ have finite colength. Does
\[
e_{\mathrm{BR}}(E_1^{[q-i]},E_2^{[i]})
=
e_{\mathrm{BR}}(E_1)
=
e_{\mathrm{BR}}(E_2)
\qquad(0\leq i\leq q)
\]
imply $\overline{E_1}=\overline{E_2}$ in $F$?
\end{question}

The answer is negative without further hypotheses. We give an explicit
counterexample over a one-dimensional complete, reduced,
Cohen--Macaulay local ring; in that example, all the mixed
Buchsbaum--Rim multiplicities agree, but the integral closures differ.
We also prove a converse in a restricted class in which the module
closures arise from $\mathfrak m$-primary ideals.

The question above is distinct from the module converse to Rees's
mixed-multiplicity theorem proved by Ferrari, Jorge-Pérez, and Merighe
\cite{FerrariJorgePerezMerighe}. Their result gives criteria for a
sequence to be a joint reduction from an equality between a
Buchsbaum--Rim multiplicity and a mixed Buchsbaum--Rim multiplicity,
under explicit height and radical hypotheses. Here the issue is
different: we compare the entire mixed Buchsbaum--Rim coefficient
vector with the two endpoint multiplicities and ask whether these
numerical equalities determine the integral closure.

The main results of this paper are as follows.
\begin{enumerate}
\item We prove eventual polynomiality for mixed length functions
associated with submodules in arbitrary homogeneous degrees and
establish the corresponding product formula.
\item We construct explicit counterexamples to
\eqref{CM3}, \eqref{CM4}, and \eqref{CM5}.
\item We prove an aggregate upper bound in terms of the annihilator
ideals of $F/E_1$ and $F/E_2$.
\item We prove that equality of integral closures implies equality of
all mixed Buchsbaum--Rim multiplicities. We also give a
counterexample to the converse and prove a positive converse for
modules whose integral closures arise from ideals.
\end{enumerate}

The paper is organized as follows. Section~\ref{sec:preliminaries}
recalls Rees algebras, integral closure, and reductions for modules.
Section~\ref{sec:BR-mixed} develops the relevant Hilbert polynomials,
and Section~\ref{sec:product} proves the product formula.
Section~\ref{sec:minkowski} studies the module analogues of the
coefficient inequalities and Minkowski's inequality.
Finally, Section~\ref{sec:integral-closure} addresses
Question~\ref{converse}.

\section{Preliminaries on Rees algebras and integral closure}
\label{sec:preliminaries}

For a submodule $E\subseteq F$, its Rees algebra is the graded
$R$-subalgebra
\[
\mathcal R(E)
=
\operatorname{im}\bigl(\operatorname{Sym}_R(E)\longrightarrow S\bigr)
=
\bigoplus_{n\geq0}E^n,
\]
where $E^n=[\mathcal R(E)]_n\subseteq S_n$.

An inclusion $U\subseteq E$ is a reduction if
\[
E^{n+1}=UE^n
\qquad\text{for all }n\gg0.
\]
The integral closure of $E$ in $F$ is
\[
\overline E
=
\left(\overline{\mathcal R(E)}^{\,S}\right)_1,
\]
where the bar on the right denotes integral closure in $S$. Equivalently,
$x\in F$ belongs to $\overline E$ if and only if it satisfies an
equation
\[
x^n+c_1x^{n-1}+\cdots+c_n=0
\quad\text{in }S,
\qquad c_i\in E^i.
\]
More generally, we write
\[
\overline{E^n}
=
\left(\overline{\mathcal R(E)}^{\,S}\right)_n.
\]

For $U\subseteq E\subseteq F$, the following conditions are
equivalent:
\begin{enumerate}
\item $U$ is a reduction of $E$;
\item $\mathcal R(E)$ is integral over $\mathcal R(U)$;
\item $E\subseteq\overline U$.
\end{enumerate}
Equivalently, if $I_E=ES$ and $I_U=US$ are the ideals of $S$
generated by $E$ and $U$, then $U$ is a reduction of $E$ if and only
if $I_U$ is a reduction of $I_E$; see
\cite[Theorem~16.2.3]{SH}.

\section{Preliminaries on Rees algebras and integral closure}
\label{sec:preliminaries}

For a submodule $E\subseteq F$, its Rees algebra is the image
\[
\mathcal R(E)
=
\operatorname{im}\bigl(\Sym_R(E)\longrightarrow \Sym_R(F)\bigr)
=
\bigoplus_{n\geq0}E^n,
\]
where $E^n=[\mathcal R(E)]_n\subseteq S_n$.

An inclusion $U\subseteq E$ is a reduction if
\[
E^{n+1}=UE^n
\qquad\text{for all }n\gg0.
\]
The integral closure of $E$ in $F$ is
\[
\overline E
=
\left(\overline{\mathcal R(E)}^{\,S}\right)_1.
\]
Equivalently, $x\in F$ belongs to $\overline E$ if and only if it satisfies an equation
\[
x^n+c_1x^{n-1}+\cdots+c_n=0
\quad\text{in }S,
\qquad c_i\in E^i.
\]
More generally, we write
\[
\overline{E^n}
=
\left(\overline{\mathcal R(E)}^{\,S}\right)_n.
\]

For $U\subseteq E\subseteq F$, the following are equivalent:
\begin{enumerate}
\item $U$ is a reduction of $E$;
\item $\mathcal R(E)$ is integral over $\mathcal R(U)$;
\item $E\subseteq\overline U$.
\end{enumerate}
Equivalently, if $I_E=ES$ and $I_U=US$ are the ideals of $S$ generated by $E$ and $U$, then $U$ is a reduction of $E$ if and only if $I_U$ is a reduction of $I_E$. See \cite[Theorem~16.2.3]{SH}.

\section{Hilbert polynomials and mixed Buchsbaum--Rim multiplicities}\label{sec:BR-mixed}

\subsection{Buchsbaum--Rim multiplicity}

Let $E\subseteq F$ satisfy $0<\ell_R(F/E)<\infty$.
By Buchsbaum and Rim \cite[Section~3]{Buchsbaum-Rim}, the function
$\ell_R((F^n/E^n)\otimes_RN)$ agrees, for $n\gg0$, with a polynomial
of degree at most $q$. We write its expansion as
\[
\ell_R((F^n/E^n)\otimes_RN)
=\frac{e_{\mathrm{BR}}(E;N)}{q!}n^q
+\text{terms of degree less than }q.
\]
The coefficient $e_{\mathrm{BR}}(E;N)$ is the Buchsbaum--Rim
multiplicity of $E$ with coefficients in $N$; see also \cite{SH}.
Equivalently,
\[
e_{\mathrm{BR}}(E;N)=
q!\lim_{n\to \infty}\frac{\ell_R((F^n/E^n)\otimes_RN)}{n^q}.
\]
When $N=R$, we write $e_{\mathrm{BR}}(E)=e_{\mathrm{BR}}(E;R)$.
The same polynomial convention gives $e_{\mathrm{BR}}(F;N)=0$.

Over a formally equidimensional local ring, equality of
Buchsbaum--Rim multiplicities characterizes reductions of nested
submodules of finite colength. This generalizes Rees' theorem for
ideals; see \cite{Buchsbaum-Rim,Katz}, \cite[p.~317]{SH}, and
\cite[Theorem~5.7]{Kleiman-Thorup}.

\begin{lem}[{\cite[Corollary~16.5.7]{SH}}]\label{Coro16.5.7SH}
Suppose that $R$ is formally equidimensional, and let
$E_1\subseteq E_2\subseteq F$ satisfy $\ell_R(F/E_1)<\infty$.
Then $E_1$ is a reduction of $E_2$ in $F$ if and only if
$e_{\mathrm{BR}}(E_1)=e_{\mathrm{BR}}(E_2)$.
\end{lem}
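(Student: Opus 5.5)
This is the Rees-type theorem for modules, and I plan to prove the two implications separately. The forward implication is formal and holds over any local ring. The converse is where formal equidimensionality is needed.

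\emph{Forward direction.} Suppose $E_1$ is a reduction of $E_2$, so $E_2^{n+1}=E_1E_2^n$ for all $n\geq n_0$. Then $E_2^{n+n_0}=E_1^{n}E_2^{n_0}$. Also $E_2^{n_0}\supseteq E_1^{n_0}$, so $E_2^{n+n_0}\subseteq E_1^{n}F^{n_0}$. This gives the sandwich
\[
E_1^{n+n_0}\subseteq E_2^{n+n_0}\subseteq E_1^{n}S_{n_0},
\]
and hence
\[
\ell_R(S_{n+n_0}/E_2^{n+n_0})\leq \ell_R(S_{n+n_0}/E_1^{n+n_0})\leq \ell_R(S_{n+n_0}/E_2^{n+n_0})+\ell_R(E_1^nS_{n_0}/E_1^{n+n_0}).
\]
It remains to show that the error term grows like $O(n^{q-1})$. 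The module $\bigoplus_n E_1^nS_{n_0}/E_1^{n+n_0}$ is a finitely generated graded $\mathcal R(E_1)$-module. It is killed by a power of $\mathfrak m$, because $S_{n_0}/E_1^{n_0}$ has finite length. One way to conclude: compare the Hilbert polynomials of $S_{n+n_0}/E_1^{n+n_0}$ and $S_n/E_1^n$, which differ only by a shift in $n$, so their leading coefficients agree. Alternatively, apply the Buchsbaum--Rim polynomial bound to this quotient. Either way the two leading coefficients coincide, so $e_{\mathrm{BR}}(E_1)=e_{\mathrm{BR}}(E_2)$.

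\emph{Converse.} This is the substantial part, and I would reduce it to the ideal case inside $S$. The plan:
\begin{enumerate}
\item Pass to the completion $\widehat R$. Buchsbaum--Rim multiplicities and reductions are preserved under this faithfully flat base change with the same residue field, and $\widehat R$ is equidimensional by hypothesis.
\item Reduce modulo each minimal prime $P$ of $\widehat R$ with $\dim \widehat R/P=d$. The multiplicity is additive: $e_{\mathrm{BR}}(E)=\sum_P \ell(\widehat R_P)\,e_{\mathrm{BR}}(E\widehat R/P)$. Since $E_1\subseteq E_2$ gives termwise inequalities, equality of the sums forces equality on each component. Integral closure is also detected modulo minimal primes, so the problem reduces to a complete local domain.
\item In the domain case, use the valuative criterion. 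Suppose $E_1$ is not a reduction of $E_2$. Then there is a Rees valuation $v$ of $I_{E_1}=E_1S$ on $\operatorname{Quot}(S)$ with $v(E_2)<v(E_1)$.
\item Show this makes the multiplicity drop strictly. This follows from Rees's argument: the Buchsbaum--Rim multiplicity equals the degree of the special fiber, i.e.\ the multiplicity of the ideal $E_1S+\mathfrak m S$ structure on $\operatorname{Proj}$. Alternatively, apply the Kleiman--Thorup intersection-theoretic formula, which expresses $e_{\mathrm{BR}}(E_1)-e_{\mathrm{BR}}(E_2)$ as a sum of nonnegative contributions over the Rees divisors. That sum is positive as soon as some Rees divisor sees a strict inequality.
\end{enumerate}

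I expect step 4, the strict drop, to be the main obstacle, since it is exactly the content of Rees's theorem. Given that the lemma is quoted as \cite[Corollary~16.5.7]{SH}, I would simply invoke the machinery of \cite[Chapter~16]{SH} or \cite[Theorem~5.7]{Kleiman-Thorup} for that step rather than reprove it.
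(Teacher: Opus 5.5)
\textbf{Verdict.} The paper gives no argument for this lemma; it imports it from \cite[Corollary~16.5.7]{SH}. Your converse also ends by citing \cite[Chapter~16]{SH} or \cite[Theorem~5.7]{Kleiman-Thorup} for the strict drop, so on that side you agree with the paper, and steps 1--3 are reasonable scaffolding that carries no weight. The real problem is the error estimate in your forward direction, described below.

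\textbf{Step 4 heuristic.} The description in step 4 is wrong as stated: the Buchsbaum--Rim multiplicity is not the degree of the special fiber. Take $R=k[[x,y]]$, $F=R^2$, $E=\mathfrak mF$. Then $\ell_R(S_n/\mathfrak m^nS_n)=(n+1)\binom{n+1}{2}$, so $e_{\mathrm{BR}}(E)=3$. But $\mathcal R(E)/\mathfrak m\mathcal R(E)\cong k[a,b,c,d]/(ad-bc)$ has degree $2$. Do not lean on that picture.

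\textbf{The gap in the forward direction.} It sits exactly at the step you flag. Your sandwich $E_1^{n+n_0}\subseteq E_2^{n+n_0}\subseteq E_1^nS_{n_0}$ is correct. The second inclusion comes from $E_2^{n_0}\subseteq S_{n_0}$, not from $E_2^{n_0}\supseteq E_1^{n_0}$. It is the same sandwich the paper uses for reduction invariance in Lemma~\ref{lem:mixed-br-basic-properties}(iii). However, neither of your two ways of bounding $\ell_R(E_1^nS_{n_0}/E_1^{n+n_0})$ works.
\begin{itemize}
\item The shift comparison only relates $\ell_R(S_{n+n_0}/E_1^{n+n_0})$ to $\ell_R(S_n/E_1^n)$. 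What you actually need is that the lower end $\ell_R(S_{n+n_0}/E_1^nS_{n_0})$ has leading term $e_{\mathrm{BR}}(E_1)n^q/q!$, and the shift says nothing about it. The shift trick is the ideal-case argument, where $I^{n+r}=J^nI^r\subseteq J^n$. For modules the factor $S_{n_0}$ changes the degree, and $E_1^nS_{n_0}$ is not of the form $E_1^m$.
\item The univariate Buchsbaum--Rim bound only gives $O(n^q)$, not $O(n^{q-1})$.
\end{itemize}

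\textbf{What is missing.} Your facts that $V=\bigoplus_n E_1^nS_{n_0}/E_1^{n+n_0}$ is finite over $\mathcal R(E_1)$ and killed by $\mathfrak m^c$ only bound its Hilbert polynomial degree by $\dim\bigl(\mathcal R(E_1)/\mathfrak m\mathcal R(E_1)\bigr)-1$. The missing ingredient is the analytic-spread bound $\dim\bigl(\mathcal R(E_1)/\mathfrak m\mathcal R(E_1)\bigr)\leq d+p-1=q$. The paper proves this in Lemma~\ref{lem:br-degree-fiber-bound} and uses it in Lemma~\ref{lem:br-degree-shift} to get $\ell_R(S_{n+h}/E^nS_h)=\ell_R(S_{n+h}/E^{n+h})+O(n^{q-1})$. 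Alternatively, use the Kirby--Rees/Kleiman--Thorup bivariate polynomial $\ell_R(S_{n+b}/E_1^nS_b)$ together with $e^{q,0}=e_{\mathrm{BR}}(E_1)$. This is legitimate once you enlarge $n_0$ into its range of validity, which the reduction equation permits. With either input added, the forward direction is complete.
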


\subsection{Multiplicity in arbitrary homogeneous degree} \label{sec:br-arbitrary-degree}

Fix an integer $e\geq1$ and a submodule $E\subseteq S_e$
such that
\begin{equation}\label{eq:br-degree-colength}
\ell_R\bigl((S_e/E)\otimes_RN\bigr)<\infty.
\end{equation}
In particular, $E^n\subseteq S_{en}$.

The classical multiplicity was introduced by Buchsbaum and Rim
\cite{Buchsbaum-Rim}. For graded developments, see \cite{Kirby-Rees1}; for geometric
approaches, see \cite{Kleiman-Thorup,Kleiman-Thorup2}. See also \cite{SH}.
We first establish the required univariate polynomiality and then
identify its leading coefficient with the multiplicity of
\cite[Section~8]{Kleiman-Thorup2}.

\begin{prop}\label{prop:br-degree-polynomiality}
Under the preceding assumptions, there exists a polynomial
$P_E(n;N)\in\mathbb Q[n]$ of degree at most $q$ such that
\[
P_E(n;N)=\ell_R\bigl((S_{en}/E^n)\otimes_RN\bigr)
\qquad(n\gg0).
\]
\end{prop}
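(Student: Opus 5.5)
Let $J:=\operatorname{ann}_R\bigl((S_e/E)\otimes_RN\bigr)$, which is $\mathfrak m$-primary by \eqref{eq:br-degree-colength}. Replacing $R$ by $R/\operatorname{ann}_R N$ if necessary, $J$ contains a power $\mathfrak m^t$ modulo $\operatorname{ann}N$. The plan is to pass to the Veronese subring
\[
T:=S^{(e)}=\bigoplus_{n\ge0}S_{en},
\]
a finitely generated standard graded $R$-algebra generated by $S_e$, a free module of rank $\binom{e+p-1}{p-1}$. On it consider the two graded $T$-modules $M:=\bigoplus_n S_{en}\otimes_RN$ and its graded submodule image $\bigoplus_n \operatorname{im}(E^n\otimes N)$. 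The length in the statement is the length of the degree-$n$ piece of the quotient $Q:=M/\mathcal E M$, where $\mathcal E=\bigoplus_n E^n\subseteq T$ is the Rees algebra of $E$.

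The first step is to construct a bigraded object whose length function is finite and eventually polynomial. I follow Buchsbaum--Rim and the Kirby--Rees approach and form
\[
G:=\bigoplus_{n\ge0}\bigoplus_{k\ge0}\frac{E^kS_{e(n-k)}\otimes N}{E^{k+1}S_{e(n-k-1)}\otimes N}
\]
(images in $S_{en}\otimes N$). Filtering $S_{en}\otimes N\supseteq ES_{e(n-1)}\otimes N\supseteq\cdots\supseteq E^n\otimes N$ gives
\[
\ell_R\bigl((S_{en}/E^n)\otimes N\bigr)=\sum_{k=0}^{n-1}\ell_R(G_{n,k}).
\]
I will show that $G$ is a finitely generated bigraded module over the bigraded ring $\mathcal R(E)\otimes$ (or over the finitely generated algebra $A:=\bigoplus_{k,j}E^kS_{ej}/E^{k+1}S_{e(j-1)}$), generated in degrees $(0,0),(1,0),(1,1)$. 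It is annihilated by $J$, and hence by a power of $\mathfrak m$; this uses that $J\,S_{e}\otimes N\subseteq E\otimes N$, so $J$ kills every graded piece. Each $G_{n,k}$ is therefore a finitely generated module over the Artinian ring $R/\mathfrak m^t$ and has finite length. A standard bigraded Hilbert polynomial theorem (Bhattacharya) then gives that $\ell(G_{n,k})$ is a polynomial in $(n-k,k)$ for $n-k,k\gg0$. Summing over $k$, the function $n\mapsto\ell_R((S_{en}/E^n)\otimes N)$ is eventually a polynomial.

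An alternative that avoids bigrading is to view $Q$ as a graded module over $\mathcal R(E)$ itself and use the Buchsbaum--Rim trick. $Q$ is finitely generated over $T$, but its pieces have finite length only because $J^{n}$-type powers kill them. So the cleaner route, which I would try first, is to use the Kleiman--Thorup/Kirby--Rees setup: consider the finitely generated $\mathbb{N}^2$-graded $\mathcal R(E)[T]$-module
\[
\bigoplus_{j,k}\bigl(S_{e(j+k)}/E^{k}S_{ej}\bigr)\otimes N,
\]
whose $(j,k)$-piece has finite length, with polynomiality for $j,k\gg0$. Then specialize $j$ to a fixed constant via the exact sequences relating consecutive $j$. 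The constant-$j$ specialization is where care is needed: polynomiality is only asserted for both indices large. To handle this, I would show that $(S_{e(j+k)}/E^kS_{ej})$ and $(S_{e(j+k)}/E^{j+k})$ differ by a length polynomial in $k$ controlled by $E^{k}S_{ej}/E^{j+k}$, whose length is eventually polynomial by the same bigraded argument.

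The degree bound is established as follows. The Krull dimension of $A$ modulo $\operatorname{ann}$ is at most $\dim N+\operatorname{rank} S_e$ minus the relevant count. More simply, the estimate $\ell_R((S_{en}/E^n)\otimes N)\le\ell_R((S_{en}/J^nS_{en})\otimes N)$ holds, since $J^nS_{en}\subseteq E^n$ because $JS_e\subseteq E$ modulo $N$-annihilation. The right side equals $\ell(N/J^nN)\cdot\binom{en+p-1}{p-1}$, which grows like $n^{d}\cdot n^{p-1}=n^q$. Hence the polynomial has degree at most $q$.

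I expect the main obstacle to be making the finite generation of the bigraded module precise when $N$ is not flat, since images rather than tensors must be used. The fallback is to replace the bigraded argument by Kirby's theorem for length functions of graded $T$-modules modulo a graded submodule with finite-length pieces ($\mathcal R(E)\otimes$-action on $\bigoplus_n (S_{en}/E^n)\otimes N$ viewed over the Noetherian ring $\mathcal R(E)[S_e]$).
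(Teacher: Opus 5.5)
Your main line is the paper's proof: pass to $B=S^{(e)}$ and $T=B\otimes_RN$, take the associated graded module of $T$ with respect to $E$, use the filtration $T_n\supseteq ET_{n-1}\supseteq\cdots\supseteq E^nT_0$ to write $\ell_R((S_{en}/E^n)\otimes_RN)=\sum_{k=0}^{n-1}\ell_R(G_{n,k})$, kill by an $\mathfrak m$-primary ideal, apply Hilbert--Serre, and bound the degree via $\binom{en+p-1}{p-1}\ell_R(N/J^nN)=O(n^q)$. As written, however, two steps fail.

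First, $G$ is \emph{not} annihilated by $J$. The pieces with $n-k=0$ are $G_{n,n}=E^nT_0$; for instance, $G_{0,0}=N$. These do not have finite length because $\dim N=d\geq1$, so the $\mathfrak m$-primary ideal $J$ cannot kill them. The inclusion $JT_1\subseteq ET_0$ only gives $JE^kT_j\subseteq E^{k+1}T_{j-1}$ for $j\geq1$. Hence no Hilbert polynomial theorem applies to $G$ itself. You must pass, as the paper does, to $K=\bigoplus_{n-k\geq1}G_{n,k}$. This is an $A$-submodule, since the generators of $A$ never lower $n-k$. It is therefore finite over the Noetherian ring $A$, and it is killed by $\mathfrak m^t\subseteq J$.

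Second, even for $K$, the bigraded theorem gives $\ell_R(K_{n,k})=P(n-k,k)$ only when \emph{both} $n-k$ and $k$ are large. Your sum $\sum_{k=0}^{n-1}$, however, runs over the whole antidiagonal, including the strips where $k$ or $n-k$ stays bounded. So ``summing over $k$'' does not by itself give eventual polynomiality. The bigrading is an unnecessary detour. In your own indexing, $A$ is generated by $S_e$ and $E$, both in total degree $n=1$. Moreover, $\ell_R((S_{en}/E^n)\otimes_RN)=\ell_R(K_n)$ is precisely the Hilbert function of $K$ for the total grading. The one-variable Hilbert--Serre theorem over $A/\mathfrak m^tA$ therefore finishes at once, and this is exactly the paper's argument. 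The bigraded version can be rescued by treating each boundary strip as a singly graded module, but that is extra work.

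Also, in the degree bound, $J^nS_{en}\subseteq E^n$ need not hold in $S$. What holds is $J^nT_n\subseteq E^nT_0$ in $T$, which suffices for the surjection $T_n/J^nT_n\to T_n/E^nT_0$. Your Kleiman--Thorup alternative with a fixed shift $j$ would also work: for fixed $j$, $\bigoplus_kE^kT_j/E^{k+j}T_0$ is a finite graded module over $\bigoplus_kE^k$ killed by a power of $\mathfrak m$. The paper, however, uses that comparison only to identify the leading coefficient, not to prove polynomiality.
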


\begin{proof}
Give the Veronese algebra $B=S^{(e)}$ its standard grading
$B_n=S_{en}$, and set $T=B\otimes_RN$ and $J=EB$.
Then $T$ is generated in degree zero, and right exactness gives
\[
H(n):=\ell_R\bigl((S_{en}/E^n)\otimes_RN\bigr)
\]
Equip $G=\operatorname{gr}_J(T)$ with the bigrading
\[
G_{i,j}=\frac{E^iT_j}{E^{i+1}T_{j-1}}
\qquad(i,j\geq0),
\]
where $T_{-1}=0$. The algebra $\operatorname{gr}_J(B)$ is generated
over $R$ in bidegrees $(1,0)$ and $(0,1)$, and $G$ is finite over it.
Consequently, $K=\bigoplus_{i\geq0,\,j\geq1}G_{i,j}$ is a finite
bigraded submodule.

By \eqref{eq:br-degree-colength}, there is $c\geq1$ with
$\mathfrak m^cT_1\subseteq ET_0$. Since $T_j=B_{j-1}T_1$, this implies
$\mathfrak m^cE^iT_j\subseteq E^{i+1}T_{j-1}$ for $j\geq1$.
Thus $\mathfrak m^cK=0$. Using total degree $i+j$, Hilbert--Serre
shows that $\ell_R(K_n)$ is eventually polynomial. The filtration
\[
T_n\supseteq ET_{n-1}\supseteq\cdots\supseteq E^nT_0
\]
gives $H(n)=\sum_{i=0}^{n-1}\ell_R(G_{i,n-i})=\ell_R(K_n)$,
proving polynomiality.

Iterating $\mathfrak m^cT_1\subseteq ET_0$ yields
$\mathfrak m^{cn}T_n\subseteq E^nT_0$. Therefore
\[
H(n)\leq\ell_R(T_n/\mathfrak m^{cn}T_n)
 =\binom{en+p-1}{p-1}\ell_R(N/\mathfrak m^{cn}N).
\]
Since $\dim_RN=d$, one has
$\ell_R(N/\mathfrak m^{cn}N)=O(n^d)$, while
$\binom{en+p-1}{p-1}=O(n^{p-1})$. Hence $H(n)=O(n^q)$,
which proves the degree bound.
which proves the degree bound.

\end{proof}

\begin{defn}\label{def:br-degree}
The Buchsbaum--Rim multiplicity of $E$ in degree $e$, with
coefficients in $N$ and relative to $S$, is defined by
\[
\widetilde e_{\mathrm{BR}}(E;N)
:=q!\lim_{n\to\infty}
\frac{\ell_R((S_{en}/E^n)\otimes_RN)}{n^q}.
\]
When $N=R$, we omit $N$
from the notation.
\end{defn}

\begin{rem}\label{rem:br-degree-conventions}
The number $\widetilde e_{\mathrm{BR}}(E;N)$ is a nonnegative integer:
$P_E(n;N)$ is eventually integer-valued and nonnegative, so its
normalized coefficient in degree $q$ is integral and nonnegative.
It may vanish; for example, it vanishes when $E=S_e$.
For $e=1$, the definition is the classical Buchsbaum--Rim
multiplicity $e_{\mathrm{BR}}(E;N)$. For $e>1$, it is relative to
multiplication in $S$, and must not be confused with the classical
multiplicity of the inclusion into the free module $S_e$, whose
ambient symmetric algebra would instead be
$\operatorname{Sym}_R(S_e)$.
\end{rem}

\begin{lem}\label{lem:br-degree-fiber-bound}
Let $(A,\mathfrak n)$ be a Noetherian local ring of dimension
$\delta\geq1$, let $C=A[t_1,\ldots,t_p]$, and let $U\subseteq C_e$
have finite colength, where $e\geq1$. Grade $\mathcal A=A[U]$ by
assigning degree one to $U$. Then
\[
\dim(\mathcal A/\mathfrak n\mathcal A)\leq\delta+p-1.
\]
\end{lem}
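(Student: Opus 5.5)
Since $\mathcal A/\mathfrak n\mathcal A$ is a standard graded algebra over the field $k=A/\mathfrak n$, its dimension is the growth order of the Hilbert function $n\mapsto\dim_k(U^n/\mathfrak nU^n)$ plus one. The plan is to show $\dim_k(U^n/\mathfrak nU^n)=O(n^{\delta+p-2})$, which gives $\dim(\mathcal A/\mathfrak n\mathcal A)\le\delta+p-1$.

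The finite-colength hypothesis gives $c\ge1$ with $\mathfrak n^cC_e\subseteq U$, hence $\mathfrak n^{cn}C_{en}\subseteq U^n\subseteq C_{en}$ for all $n$. These sandwich $U^n$ between two modules whose lengths we control. I would then compare $U^n/\mathfrak nU^n$ with lengths of quotients of $C_{en}$. A first attempt uses
\[
\mu(U^n)\le \ell_A(U^n/\mathfrak n U^n)\le \ell_A(C_{en}/\mathfrak n U^n)\le \ell_A(C_{en}/\mathfrak n^{cn+1}C_{en}),
\]
but this is $O(n^{\delta+p-1})$, one power too large. So the crude bound is not enough and a difference argument is needed. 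Write
\[
\ell(U^n/\mathfrak nU^n)=\ell(C_{en}/\mathfrak nU^n)-\ell(C_{en}/U^n).
\]
By Proposition~\ref{prop:br-degree-polynomiality} (with $N=A$), both $\ell(C_{en}/U^n)$ and $\ell(C_{en}/\mathfrak nU^n)$ are eventually polynomials of degree $\le\delta+p-1$. The second case applies the proposition to the submodule $\mathfrak nU\subseteq C_e$, which also has finite colength, together with the observation that $\mathfrak n^nU^n\subseteq\mathfrak nU^n$. That observation only gives a one-sided comparison, so I would instead argue directly with the Rees-algebra structure, as follows.

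Consider the $A$-module $\mathcal A$ with its $\mathfrak n$-adic structure and the bigraded associated ring $G=\bigoplus_{i,n}\mathfrak n^iU^n/\mathfrak n^{i+1}U^n$, a finitely generated bigraded algebra over $k$. Its dimension is at most $\dim\mathcal A$, and $\dim\mathcal A\le\dim C^{(e)}=\delta+p$ since $\mathcal A\subseteq C^{(e)}$ and $C^{(e)}$ is integral over $\mathcal A$ in large degrees. More precisely, $\mathcal A$ and $C^{(e)}$ agree up to $\mathfrak n$-torsion-like colength in each degree, so their dimensions coincide. Now $\mathcal A/\mathfrak n\mathcal A=G_{0,\ast}$ is a quotient of $G$. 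The key point is that $\mathfrak n$ is not nilpotent on $\mathcal A$ in the relevant sense: $\dim(\mathcal A/\mathfrak n\mathcal A)\le\dim\mathcal A-\operatorname{ht}(\mathfrak n\mathcal A)$. Better, I would use $\dim\mathcal A/\mathfrak n\mathcal A\le\dim\mathcal A-1$ provided $\mathfrak n\mathcal A$ is not contained in any minimal prime $P$ of $\mathcal A$ with $\dim\mathcal A/P=\dim\mathcal A$. So the strategy is the following:
\begin{enumerate}
\item Note that $\mathcal A\to C^{(e)}$ is a finite extension in large degrees, since $C_{en}\cdot\mathfrak n^{cn}\subseteq U^n$; each minimal prime of $\mathcal A$ is therefore contracted from $C^{(e)}$, or is irrelevant.
\item Note that $\dim C^{(e)}/\mathfrak nC^{(e)}=p$. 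For a prime $P$ of $\mathcal A$ containing $\mathfrak n$, lying under a prime $Q$ of $C^{(e)}$, one gets $\dim\mathcal A/P\le\dim C^{(e)}/Q\le p\le\delta+p-1$, because $\delta\ge1$.
\end{enumerate}

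The main obstacle is justifying step 1, lying over/finiteness, because $C^{(e)}$ is not literally finite over $\mathcal A$. I would handle it by working with the Proj and the irrelevant ideal $\mathcal A_+$. Primes of $\mathcal A/\mathfrak n\mathcal A$ containing $\mathcal A_+$ contribute dimension $0$. Away from $V(\mathcal A_+)$, the inclusion $\mathfrak n^{c}C_e\subseteq U$ shows that $\mathcal A_u=C^{(e)}_u$ for $u\in\mathfrak n^cC_e$ after inverting suitable homogeneous elements, so the local comparison with $C^{(e)}$ gives the bound. The alternative is to pick $u\in U$ with $\mathcal A_+\subseteq\sqrt{(U)}$ and compute dimensions on the affine charts $D_+(u)$.
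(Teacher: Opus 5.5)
Your argument fails at Steps~1--2 and at the chart repair. Comparing $\mathcal A$ with $C^{(e)}$ cannot bound $\mathcal A/\mathfrak n\mathcal A$, because the two rings agree only away from $V(\mathfrak n)$: from $\mathfrak n^{cn}C_{en}\subseteq U^n$ you get $\mathcal A_a=C^{(e)}_a$ for $a\in\mathfrak n$. All of $\dim(\mathcal A/\mathfrak n\mathcal A)$ lives over the closed point, which is exactly where the two rings differ.

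Concretely, $C^{(e)}$ is not integral over $\mathcal A$ in any range of degrees. For $p=e=1$ and $U=It$, the integral closure of $A[It]$ in $A[t]$ is $\bigoplus_n\overline{I^n}t^n$. So lying over, and your inequality $\dim\mathcal A/P\le\dim C^{(e)}/Q$, are unavailable. Worse, when $U\subseteq\mathfrak nC_e$, every prime $Q$ of $C^{(e)}$ containing $\mathfrak n$ contains $U$. Hence the primes $P\supseteq\mathfrak n\mathcal A$ with $P\nsupseteq\mathcal A_+$, which are the only ones contributing positive dimension, lie under no prime of $C^{(e)}$ at all.

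The chart claim also fails. For $A=k[x,y]_{(x,y)}$, $U=\mathfrak n t$ and $u=xt$, the degree-zero parts of $\mathcal A_u$ and $C_u$ are $A[y/x]$ and $A_x$, and these are not equal.

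Most tellingly, Step~2 would prove $\dim(\mathcal A/\mathfrak n\mathcal A)\le p$, which is false. For $p=e=1$ and $U=\mathfrak n t$ over a regular local ring with $\delta\geq 2$, one has $\mathcal A/\mathfrak n\mathcal A\cong\operatorname{gr}_{\mathfrak n}(A)$, of dimension $\delta>p$, so the lemma is sharp there. Geometrically, $\operatorname{Proj}\mathcal A$ is dominated by the blowup of the base ideal on $\mathbb P^{p-1}_A$. Its closed fiber can have dimension $\delta+p-2$, not $p-1$. Your bound $\dim\mathcal A\le\delta+p$ is true, but not for the integrality reason you give.

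The principle you isolated is the right one: $\dim\mathcal A/\mathfrak n\mathcal A\le\dim\mathcal A-1$ when $\mathfrak n$ avoids the top-dimensional minimal primes of $\mathcal A$. What is missing is a valid way to supply its inputs. Here is one.

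First reduce modulo the minimal primes $\mathfrak p$ of $A$. The kernel of $\mathcal A\to\prod_{\mathfrak p}(A/\mathfrak p)[t_1,\dots,t_p]$ is $\mathcal A\cap\operatorname{nil}(A)C$, which is nilpotent. So it suffices to treat $A$ a domain, of dimension $\delta'\geq1$, since $\mathfrak p\neq\mathfrak n$ when $\delta\geq1$.

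Then $\mathcal A\subseteq C$ is a domain, finitely generated over $A$, of transcendence degree at most $p$. For any prime $P\supseteq\mathfrak n\mathcal A$, the dimension inequality (Matsumura, Thm.~15.5) gives $\operatorname{ht}P+\operatorname{trdeg}_k\kappa(P)\le\delta'+p$. Since $0\neq\mathfrak n\mathcal A\subseteq P$, one has $\operatorname{ht}P\geq1$. Since $\mathcal A/P$ is a finitely generated $k$-domain, $\dim\mathcal A/P=\operatorname{trdeg}_k\kappa(P)\le\delta'+p-1\le\delta+p-1$. Note that finite colength is not even needed here.

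The paper makes the same reduction to a domain and then argues geometrically. The graph closure $Y$ is integral of dimension $\delta'+p-1$ and maps properly onto $\operatorname{Proj}\mathcal A$. Its closed fiber is a proper closed subset, which bounds $\dim\operatorname{Proj}(\mathcal A/\mathfrak n\mathcal A)$ by $\delta'+p-2$.
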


\begin{proof}
It suffices to consider the images of $\mathcal A$ over the minimal
prime quotients of $A$. Indeed, the kernel of the map to the product
of these images is nilpotent, and the resulting finitely many
closed subsets cover $\operatorname{Spec}(\mathcal A)$.

Suppose first that $A$ is a domain of positive dimension
$\delta'$. On $X=\mathbb P_A^{p-1}$, the forms in $U$ define a
linear system which is the complete degree-$e$ system outside the
closed fiber. Its graph closure $Y$ is the blowup of its base ideal.
It is integral, has dimension $\delta'+p-1$, and maps properly and
surjectively to $\operatorname{Proj}(A[U])$. Surjectivity follows
because its generic image is dense and the morphism is proper.
The closed fiber $Y_{\mathfrak n}$ is a proper closed subset of $Y$,
so its dimension is at most $\delta'+p-2$. Surjectivity on closed
fibers now gives
$\dim(A[U]/\mathfrak nA[U])\leq\delta'+p-1$.
If the domain quotient has dimension zero, it is a field, and its
algebra of forms has dimension at most $p$. Both bounds are at most
$\delta+p-1$.
\end{proof}

\begin{lem}\label{lem:br-degree-shift}
With $B=S^{(e)}$, $T=B\otimes_RN$, and $H(n)=\ell_R(T_n/E^nT_0)$,
for every fixed integer $h\geq1$ one has
\[
\ell_R(T_{n+h}/E^nT_h)=H(n+h)+O(n^{q-1}).
\]
\end{lem}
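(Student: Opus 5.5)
The plan is to compare the two submodules directly. Since $E^{n+h}T_0\subseteq E^nT_h\subseteq T_{n+h}$, additivity of length gives
\[
\ell_R(T_{n+h}/E^nT_h)=H(n+h)-\ell_R\bigl(E^nT_h/E^{n+h}T_0\bigr),
\]
so it suffices to show that $\ell_R(E^nT_h/E^{n+h}T_0)=O(n^{q-1})$. I would filter this module by the chain
\[
E^nT_h\supseteq E^{n+1}T_{h-1}\supseteq\cdots\supseteq E^{n+h}T_0 .
\]
With the bigraded module $G=\operatorname{gr}_J(T)$ from the proof of Proposition~\ref{prop:br-degree-polynomiality}, the successive quotients are $G_{n+h-j,\,j}$ for $1\le j\le h$. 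Hence
\[
\ell_R\bigl(E^nT_h/E^{n+h}T_0\bigr)=\sum_{j=1}^{h}\ell_R(G_{n+h-j,\,j}).
\]
Because $h$ is fixed, it is enough to prove, for each fixed $j\geq1$, that $\ell_R(G_{i,j})=O(i^{q-1})$ as $i\to\infty$.

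Next I fix $j\geq1$ and study the strand $G_{\ast,j}=\bigoplus_iG_{i,j}$. It is a graded module over the bidegree-$(\ast,0)$ subalgebra $\mathcal G_0=\bigoplus_iE^i/E^{i+1}B_{-1}$ of $\operatorname{gr}_J(B)$, which is simply the image of the Rees algebra $R[E]=\bigoplus_iE^i$. Recall that $G$ is finite over $\operatorname{gr}_J(B)$, and that $\operatorname{gr}_J(B)$ is generated in bidegrees $(1,0)$ and $(0,1)$ with $R$-finite bidegree-$(0,1)$ component. It follows that the strand is finite over $\mathcal G_0$. Indeed, $G_{\ast,j}$ is spanned over $\mathcal G_0$ by the finitely many products of a generator of $G$ of bidegree $(a,b)$ with $b\le j$ and a monomial of degree $j-b$ in a finite generating set of the $(0,1)$-component. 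Moreover $\mathfrak m^cG_{\ast,j}=0$ by the argument in Proposition~\ref{prop:br-degree-polynomiality}. So $G_{\ast,j}$ is a finitely generated graded module over $\mathcal G_0/\mathfrak m^c\mathcal G_0$, and this ring is a quotient of $R[E]/\mathfrak m^cR[E]$. Here $R[E]$ is graded with $E$ in degree one and $E\subseteq S_e$.

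The key input is the dimension bound. The ring $R[E]/\mathfrak m^cR[E]$ has the same dimension as $R[E]/\mathfrak mR[E]$, since the two differ by a nilpotent ideal. By Lemma~\ref{lem:br-degree-fiber-bound} this dimension is at most $d+p-1=q$, applied with $A=R$, $C=S$, $U=E$ and $\delta=d$. If the colength hypothesis only holds after tensoring with $N$, I would first replace $R$ by $R/\operatorname{ann}_RN$, which has dimension $d$; this is harmless because $T$ is annihilated by $\operatorname{ann}_RN$. A finitely generated graded module of finite length in each degree over a standard graded ring of dimension at most $q$, killed by a power of the maximal ideal of the base, has Hilbert function that is eventually a polynomial of degree at most $q-1$. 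This gives $\ell_R(G_{i,j})=O(i^{q-1})$, and summing over $1\le j\le h$ finishes the proof.

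The main obstacle I expect is the bookkeeping in the finiteness step. One must check that the $j$-th strand is genuinely finite over the $(\ast,0)$-subalgebra, and correctly identify that subalgebra with a quotient of the fiber-type ring $R[E]/\mathfrak mR[E]$, so that Lemma~\ref{lem:br-degree-fiber-bound} yields the drop from $q$ to $q-1$ in the growth exponent.
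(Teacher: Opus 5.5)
Your proposal is correct and follows essentially the paper's route. Both use the same exact sequence to reduce to bounding $\ell_R(E^nT_h/E^{n+h}T_0)$, and both bound it by Hilbert--Serre over the fiber ring $R'[E']/\mathfrak m^c R'[E']$, where $R'=R/\operatorname{Ann}_R(N)$, whose dimension is controlled by Lemma~\ref{lem:br-degree-fiber-bound}. The paper avoids your strand decomposition by treating $V_h=\bigoplus_n E^nT_h/E^{n+h}T_0$ directly as a finite module over that ring, generated by $T_h$ and killed by $\mathfrak m^{ch}$. Your passage to $R'$ also tacitly uses that $(S_e/E)\otimes_R R'$ has finite length; this holds because its support is $\operatorname{Supp}(S_e/E)\cap\operatorname{Supp}N=\{\mathfrak m\}$.
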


\begin{proof}
Set $R'=R/\operatorname{Ann}_R(N)$ and let $E'$ be the image of $E$
in $S_e\otimes_RR'$. The module
$(S_e\otimes_RR')/E'$ has finite length: outside the closed point
its tensor product with the nonzero localized module $N$ vanishes,
so Nakayama's lemma forces it to vanish. Also $\dim R'=d$.
Thus Lemma~\ref{lem:br-degree-fiber-bound}, applied to
$\mathcal A=R'[E']$, gives
$\dim(\mathcal A/\mathfrak m\mathcal A)\leq q$.

For fixed $h$, the graded module
\[
V_h=\bigoplus_{n\geq0}\frac{E^nT_h}{E^{n+h}T_0}
\]
is finite over $\mathcal A$, since its numerator is generated by
$T_h$ in degree zero. Choose $c$ as in the proof of
Proposition~\ref{prop:br-degree-polynomiality}. Then
$\mathfrak m^{ch}T_h\subseteq E^hT_0$, whence
$\mathfrak m^{ch}V_h=0$ and $\dim_{\mathcal A}V_h\leq q$.
Hilbert--Serre yields
$\ell_R((V_h)_n)=O(n^{q-1})$.
The assertion follows from the exact sequence
\[
0\longrightarrow\frac{E^nT_h}{E^{n+h}T_0}
\longrightarrow\frac{T_{n+h}}{E^{n+h}T_0}
\longrightarrow\frac{T_{n+h}}{E^nT_h}
\longrightarrow0.
\]
\end{proof}

\begin{prop}\label{prop:br-degree-KT-identification}
Under the assumptions of Proposition~\ref{prop:br-degree-polynomiality},
one has
\[
\widetilde e_{\mathrm{BR}}(E;N)=e^{q,0}(E;N),
\]
Here $e^{q,0}(E;N)$ is the Buchsbaum--Rim multiplicity of
Kleiman--Thorup \cite[Section~8]{Kleiman-Thorup2}, computed for
$S\otimes_RN$ and the degree-$e$ submodule $E$.
\end{prop}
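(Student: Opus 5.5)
The plan is to unwind the Kleiman--Thorup definition in \cite[Section~8]{Kleiman-Thorup2} into a statement about a bivariate length function, and then compare it with the univariate function $H(n)$ using Lemma~\ref{lem:br-degree-shift}. I work throughout with the Veronese algebra $B=S^{(e)}$, the module $T=B\otimes_RN$, and $\mathcal A=R[E]\subseteq B$ graded so that $E$ sits in degree one, as in the proofs of Proposition~\ref{prop:br-degree-polynomiality} and Lemma~\ref{lem:br-degree-shift}.

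\emph{Step 1: the Kleiman--Thorup function.} In their setting one has a graded module $T$ over a standard graded algebra and a graded subalgebra generated in degree one, here $\mathcal A$. They consider
\[
\lambda(n,h)=\ell_R\bigl(T_{n+h}/E^nT_h\bigr).
\]
This function is polynomial for $n,h\gg0$, of total degree at most $q$. The number $e^{q,0}(E;N)$ is, up to the factor $q!$, the coefficient of $n^q h^0$ in the top homogeneous part. The bound on total degree is $q$ because of Lemma~\ref{lem:br-degree-fiber-bound}, applied as in Lemma~\ref{lem:br-degree-shift}.

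If their grading is stated in terms of $S$ rather than the Veronese $B$, with $E$ in degree $e$, I will first translate. The relevant lengths $\ell_R(S_{e(n+h)}\otimes N/E^nS_{eh}\otimes N)$ are exactly $\lambda(n,h)$ in the Veronese grading. I expect this matching of conventions, including the exact normalisation of $e^{q,0}$ as a coefficient, to be the main obstacle. It requires a careful reading of \cite[Section~8]{Kleiman-Thorup2} rather than a new argument.

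\emph{Step 2: fixing $h$.} Write the eventual polynomial as $\Lambda(n,h)=\sum_{a+b\le q}c_{a,b}n^ah^b$. Since the total degree is at most $q$, the coefficient of $n^q$ is the constant $c_{q,0}$, independent of $h$. Hence for any fixed $h\gg0$,
\[
\lambda(n,h)=c_{q,0}\,n^q+O(n^{q-1})\qquad(n\to\infty).
\]

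\emph{Step 3: comparison with $H$.} Lemma~\ref{lem:br-degree-shift} gives, for this same fixed $h$,
\[
\lambda(n,h)=H(n+h)+O(n^{q-1}).
\]
By Proposition~\ref{prop:br-degree-polynomiality}, $H(n+h)=P_E(n+h;N)$ for $n\gg0$. The polynomial $P_E(n+h;N)$ has $n^q$-coefficient $\widetilde e_{\mathrm{BR}}(E;N)/q!$ by Definition~\ref{def:br-degree}. Comparing leading coefficients gives
\[
c_{q,0}=\widetilde e_{\mathrm{BR}}(E;N)/q!,
\]
that is, $e^{q,0}(E;N)=q!\,c_{q,0}=\widetilde e_{\mathrm{BR}}(E;N)$. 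This comparison is valid even when both sides vanish, for example when $E=S_e$.
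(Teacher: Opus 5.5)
Your argument is correct and is essentially the paper's proof. You fix the second Kleiman--Thorup variable at a large value: the paper's $b=eh$ in the grading of $S$ is your $h$ in the Veronese grading, and this rescaling does not change the coefficient $e^{q,0}(E;N)/q!$ of $n^q$. You then use the total-degree bound to get $\lambda(n,h)=\frac{e^{q,0}(E;N)}{q!}n^q+O(n^{q-1})$, and compare with $H(n+h)$ via Lemma~\ref{lem:br-degree-shift}.

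The one thing to fix is where the polynomiality and total-degree bound come from. Both follow directly from \cite[Proposition~8.2]{Kleiman-Thorup2}, so the appeal to Lemma~\ref{lem:br-degree-fiber-bound} is unnecessary. What you do need to check is that proposition's hypothesis: \eqref{eq:br-degree-colength} forces the base locus of $E$ to meet the support of $\widetilde{S\otimes_RN}$ only in the closed fiber.
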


\begin{proof}
Condition~\eqref{eq:br-degree-colength} implies that the base locus
of $E$ on $\operatorname{Proj}(S)$ meets the support of
$\widetilde{S\otimes_RN}$ only in the closed fiber. Hence
\cite[Proposition~8.2]{Kleiman-Thorup2} applies to
\[
\lambda(n,b)=\ell_R\left(
\frac{S_{en+b}\otimes_RN}{E^n(S_b\otimes_RN)}\right).
\]
For $n,b\gg0$, this is a polynomial of total degree at most $q$,
whose homogeneous component of degree $q$ is
\[
\sum_{i+k=q}\frac{e^{i,k}(E;N)}{i!\,k!}n^ib^k.
\]
Choose a fixed $h\geq1$ sufficiently large that $b=eh$ belongs to
the polynomial range. Then
\[
\lambda(n,eh)=\frac{e^{q,0}(E;N)}{q!}n^q+O(n^{q-1}).
\]
On the other hand, Lemma~\ref{lem:br-degree-shift} gives
\[
\lambda(n,eh)=H(n+h)+O(n^{q-1})
\]
Comparison proves the assertion.
\end{proof}

\begin{rem}\label{rem:br-degree-specialization}
The preceding argument does not specialize the polynomial identity
of \cite[Proposition~8.2]{Kleiman-Thorup2} at $b=0$, which need not
lie in its stated range of validity. Instead, it fixes a sufficiently
large value $b=eh$ and uses Lemma~\ref{lem:br-degree-shift} to compare
the leading coefficients.
\end{rem}

\begin{rem}\label{remark1}
Let $M\subseteq F$ have finite colength, and let $r\geq1$.
Regard $M^r$ as a submodule of $S_r$. Since
$(M^r)^n=M^{rn}$ inside $S_{rn}$, the defining Hilbert functions
satisfy
\[
\ell_R\!\left((S_{rn}/(M^r)^n)\otimes_RN\right)
=\ell_R\!\left((S_{rn}/M^{rn})\otimes_RN\right).
\]
The leading term on the right is
$\widetilde e_{\mathrm{BR}}(M;N)(rn)^q/q!$.
Consequently,
\[
\widetilde e_{\mathrm{BR}}(M^r;N)
=r^q\widetilde e_{\mathrm{BR}}(M;N).
\]
In particular, for $N=R$,
$\widetilde e_{\mathrm{BR}}(M^r)=r^{d+p-1}e_{\mathrm{BR}}(M)$.
\end{rem}

\subsection{Mixed Buchsbaum--Rim multiplicities}

Let $E_1,E_2\subseteq F$ satisfy $\ell_R(F/E_i)<\infty$ for
$i=1,2$. The function
\[
h_R(n_1,n_2;N)=
\ell_R\!\left(
(S_{n_1+n_2}/E_1^{n_1}E_2^{n_2})\otimes_RN
\right)
\]
agrees, for $n_1,n_2\gg0$, with a polynomial in
$\mathbb Q[n_1,n_2]$ of total degree at most $q$; see the remark
below and Proposition~\ref{prop:mixed-br-arbitrary-degree}.
Its homogeneous component of total degree $q$ is written as
\[
\sum_{\substack{a,b\geq0\\a+b=q}}
\frac{e_{\mathrm{BR}}(E_1^{[a]},E_2^{[b]};N)}
{a!\,b!}\,n_1^a n_2^b.
\]
The coefficients
$e_{\mathrm{BR}}(E_1^{[a]},E_2^{[b]};N)$ are called the
mixed Buchsbaum--Rim multiplicities of $E_1$ and $E_2$
of type $(a,b)$ with coefficients in $N$.
The bracketed exponents specify the multiplicity type
and do not denote module powers.

For $N=R$, we write
\[
e_{\mathrm{BR}}(E_1^{[a]},E_2^{[b]})
:=
e_{\mathrm{BR}}(E_1^{[a]},E_2^{[b]};R).
\]
When $p=1$ and $E_1,E_2$ are proper, they are
$\mathfrak m$-primary ideals, and this construction recovers their
mixed Hilbert--Samuel multiplicities with coefficients in $N$.

\begin{rem}
Combining \cite[Proposition~8.2]{Kleiman-Thorup2} with
multigraded Hilbert--Serre yields a polynomial in
$\mathbb Q[n_1,n_2]$ that agrees with $h_R(n_1,n_2;N)$
for $n_1,n_2\gg0$.
Indeed, one applies these results to the two outer terms of
the exact sequence
\[
0\longrightarrow
\frac{E_2^{n_2}T_{n_1}}{E_1^{n_1}E_2^{n_2}T_0}
\longrightarrow
\frac{T_{n_1+n_2}}{E_1^{n_1}E_2^{n_2}T_0}
\longrightarrow
\frac{T_{n_1+n_2}}{E_2^{n_2}T_{n_1}}
\longrightarrow0,
\]
where $T=S\otimes_RN$, using the associated graded filtration
induced by $E_1$ for the left-hand term.
Moreover, choosing $c\geq1$ with
$\mathfrak m^cF\subseteq E_1\cap E_2$ and setting $u=n_1+n_2$,
we obtain
\[
0 \leq h_R(n_1,n_2;N)
\leq
\binom{u+p-1}{p-1}\ell_R(N/\mathfrak m^{cu}N)
\]
Since eventual polynomiality has already been established, this
uniform estimate bounds the total degree by $d+p-1$.
\end{rem}

\subsection{Mixed multiplicities in arbitrary homogeneous degrees}

Fix positive integers $e_1,e_2$ and submodules
$E_i\subseteq S_{e_i}=F^{e_i}$ satisfying
$\ell_R(S_{e_i}/E_i)<\infty$ for $i=1,2$.
All products are taken in $S$.

\begin{prop}\label{prop:mixed-br-arbitrary-degree}
Fix positive integers $e_1,e_2$, and let
$E_i\subseteq S_{e_i}=F^{e_i}$ be submodules satisfying
$\ell_R(S_{e_i}/E_i)<\infty$ for $i=1,2$. Then the function
\[
h_R(n_1,n_2;N) = \ell_R\left( \left( S_{e_1n_1+e_2n_2}/E_1^{n_1}E_2^{n_2} \right)\otimes_RN \right)
\]
agrees, for $n_1,n_2\gg0$, with a polynomial in $\mathbb Q[n_1,n_2]$ of total degree at most $q$.
\end{prop}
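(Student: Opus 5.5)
We split the length function into two pieces, each controlled by a multigraded module, and get the degree bound from a separate uniform estimate. Set $T=S\otimes_RN$, a graded module over $S$ generated in degree zero. By right exactness,
\[
h_R(n_1,n_2;N)=\ell_R\bigl(T_{e_1n_1+e_2n_2}/E_1^{n_1}E_2^{n_2}T_0\bigr).
\]
We use the exact sequence
\[
0\to\frac{E_2^{n_2}T_{e_1n_1}}{E_1^{n_1}E_2^{n_2}T_0}\to\frac{T_{e_1n_1+e_2n_2}}{E_1^{n_1}E_2^{n_2}T_0}\to\frac{T_{e_1n_1+e_2n_2}}{E_2^{n_2}T_{e_1n_1}}\to0,
\]
the arbitrary-degree analogue of the sequence in the preceding remark. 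It reduces eventual polynomiality in $(n_1,n_2)$ to the two outer terms.

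\textbf{Right-hand term.} Consider $\ell_R(T_{en+b}/E_2^nT_b)$ with $e=e_2$, $n=n_2$ and $b=e_1n_1$. By \cite[Proposition~8.2]{Kleiman-Thorup2}, applied exactly as in Proposition~\ref{prop:br-degree-KT-identification}, this agrees with a polynomial in $(n,b)$ for $n,b\gg0$. The needed base-locus condition is that $E_2$ has finite colength. Substituting $b=e_1n_1$ gives a polynomial in $(n_1,n_2)$ for $n_1,n_2\gg0$.

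\textbf{Left-hand term.} Consider the trigraded module
\[
W=\bigoplus_{i,j,k}\frac{E_1^{i}E_2^{j}T_k}{E_1^{i+1}E_2^{j}T_{k-e_1}}.
\]
It is a quotient filtration module over the algebra $R[E_1,E_2,S_1]$, which is generated in degrees $(1,0,0)$, $(0,1,0)$ and $(0,0,1)$, and $W$ is finite over it.

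Filtering $E_2^{n_2}T_{e_1n_1}\supseteq E_1E_2^{n_2}T_{e_1(n_1-1)}\supseteq\cdots\supseteq E_1^{n_1}E_2^{n_2}T_0$ expresses the left term as $\sum_{i=0}^{n_1-1}\ell_R(W_{i,n_2,e_1(n_1-i)})$. As in Proposition~\ref{prop:br-degree-polynomiality}, there is $c$ with $\mathfrak m^cS_{e_1}\subseteq E_1$, and hence $\mathfrak m^c$ kills every $W_{i,j,k}$ with $k\geq e_1$.

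We pass to the Veronese in the third variable with step $e_1$ and the diagonal in $(i,k/e_1)$. This is the same device used in Proposition~\ref{prop:br-degree-polynomiality} with $K=\bigoplus_{j\geq1}G_{i,j}$. The sum then becomes the degree-$(n_1,n_2)$ piece of a finite bigraded module over a standard bigraded algebra with Artinian degree-zero part. Multigraded Hilbert--Serre then gives eventual polynomiality.

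\textbf{Main obstacle.} The delicate step is this bookkeeping. One must check that the sum over $i$ with $k=e_1(n_1-i)$ is really a graded piece of a finitely generated bigraded module. To arrange this, regrade $W$ by $(i+k/e_1,\,j)$ restricted to $k\in e_1\mathbb Z$. Then $W$ is generated over $R[E_1t,\,S_{e_1}t,\,E_2u]$, with $E_1t$ and $S_{e_1}t$ both in degree $(1,0)$. This is standard bigraded, and $W$ is finite because $T$ is generated in degree zero. If the Veronese restriction causes finiteness trouble, we would instead treat the residues $k\bmod e_1$ separately, each of which is finite.

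\textbf{Degree bound.} Choose $c$ with $\mathfrak m^cS_{e_i}\subseteq E_i$ for $i=1,2$, and set $u=n_1+n_2$. Then $\mathfrak m^{cu}S_{e_1n_1+e_2n_2}\subseteq E_1^{n_1}E_2^{n_2}$, so
\[
0\le h_R\le\binom{e_1n_1+e_2n_2+p-1}{p-1}\ell_R(N/\mathfrak m^{cu}N)=O(u^{q}).
\]
A polynomial bounded by $O(u^q)$ on the region $n_1,n_2\gg0$ has total degree at most $q$. Indeed, its top homogeneous part, evaluated along rays with positive rational slope, would otherwise grow faster than $u^q$. This completes the plan.
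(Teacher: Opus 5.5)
Your proposal is correct and follows essentially the same route as the paper. It uses the same exact sequence and \cite[Proposition~8.2]{Kleiman-Thorup2} for the right-hand term with $b=e_1n_1$. For the left-hand term, your $W_{i,j,e_1\ell}$ with $\ell\geq1$ is exactly the paper's $G_{i,\ell,j}$ in $\operatorname{gr}_{\mathcal J}\bigl(\bigoplus_{j,b}E_2^bT_{e_1j}\bigr)$, regraded by $(i+\ell,\,\cdot\,)$, and you finish with the same $O\bigl((n_1+n_2)^q\bigr)$ degree bound.

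One point of notation needs tightening. The algebra must be the external multi-Rees algebra $\bigoplus_{i,j,\ell}E_1^iE_2^jS_{e_1\ell}$, in which $E_1$ and $S_{e_1}$ are distinct generators. It cannot be the subring $R[E_1t,S_{e_1}t,E_2u]\subseteq S[t,u]$, where $E_1t\subseteq S_{e_1}t$ and the two shifts would collapse. Also, your residue-class fallback is unnecessary: $W$ is generated by $W_{0,0,0}=N$, so its Veronese part is already finite over the Veronese algebra.
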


\begin{proof}
Set $T=S\otimes_RN$, $T_v=S_v\otimes_RN$. All products acting on $T$ are understood as the images of the corresponding multiplication maps. By right exactness of tensor products, $\left( S_u/E_1^{n_1}E_2^{n_2} \right)\otimes_RN \cong T_u/E_1^{n_1}E_2^{n_2}T_0$, where $u=e_1n_1+e_2n_2$.

Since $E_1^{n_1}E_2^{n_2}T_0 \subseteq E_2^{n_2}T_{e_1n_1} \subseteq T_u$, we obtain an exact sequence
\[
0\longrightarrow \frac{E_2^{n_2}T_{e_1n_1}}{E_1^{n_1}E_2^{n_2}T_0} \longrightarrow \frac{T_u}{E_1^{n_1}E_2^{n_2}T_0} \longrightarrow \frac{T_u}{E_2^{n_2}T_{e_1n_1}} \longrightarrow 0.
\]
Consequently, $h_R(n_1,n_2;N)=A(n_1,n_2)+B(n_1,n_2)$, where $A(n_1,n_2)$ and $B(n_1,n_2)$ denote the lengths of the left-hand and right-hand terms, respectively.

We first consider $B(n_1,n_2)$. Since $\ell_R(S_{e_2}/E_2)<\infty$, the hypotheses of \cite[Proposition~8.2]{Kleiman-Thorup2} are satisfied. Hence there exists a polynomial $Q(a,b)\in\mathbb Q[a,b]$ such that $\ell_R\left( T_{e_2a+b}/E_2^aT_b \right) = Q(a,b)$ for $a,b\gg0$. Taking $a=n_2$ and $b=e_1n_1$, we obtain $B(n_1,n_2)=Q(n_2,e_1n_1)$ for $n_1,n_2\gg0$. Thus $B$ is eventually polynomial in $n_1,n_2$.

We now prove the eventual polynomiality of $A$. Consider the externally bigraded algebra and module $\mathcal C = \bigoplus_{j,b\geq0}E_2^bS_{e_1j}$ and $\mathcal T = \bigoplus_{j,b\geq0}E_2^bT_{e_1j}$, where the summand $E_2^bS_{e_1j}$ has bidegree $(j,b)$.

Since $S_{e_1j}=(S_{e_1})^j$, the algebra $\mathcal C$ is generated over $R$ by its components of degrees $(1,0)$ and $(0,1)$. Moreover, $\mathcal T$ is a finite graded $\mathcal C$-module generated by $\mathcal T_{0,0}=T_0=N$.

Let $\mathcal J$ be the ideal of $\mathcal C$ generated by $E_1$ in bidegree $(1,0)$. Set $\mathcal A=\operatorname{gr}_{\mathcal J}(\mathcal C)$ and $G=\operatorname{gr}_{\mathcal J}(\mathcal T)$. Then $G$ is a finite graded module over the Noetherian algebra $\mathcal A$.

Combining the associated grading with the original bigrading, and writing $j$ for the residual first degree after removing the $\mathcal J$-filtration degree, we obtain
\[
G_{i,j,b} = \frac{ E_1^iE_2^bT_{e_1j} }{ E_1^{i+1}E_2^bT_{e_1(j-1)} }, \qquad i,j,b\geq0,
\]
where the denominator is understood to be zero when $j=0$.

The generators of $\mathcal A$ have tridegrees $(1,0,0)$, $(0,1,0)$, and $(0,0,1)$. The submodule
\[
K=\bigoplus_{i,b\geq0,\,j\geq1}G_{i,j,b}
\]
is stable under the action of $\mathcal A$, and hence is finite over $\mathcal A$.

Choose $c\geq1$ such that $\mathfrak m^cS_{e_1}\subseteq E_1$. For $j\geq1$, the equality $T_{e_1j}=S_{e_1}T_{e_1(j-1)}$ implies $\mathfrak m^cE_1^iE_2^bT_{e_1j} \subseteq E_1^{i+1}E_2^bT_{e_1(j-1)}$. Thus $\mathfrak m^cK=0$.

Now replace the trigrading by the bigrading $(i,j,b)\longmapsto(i+j,b)$. Under this regrading, $\mathcal A$ becomes a standard bigraded algebra generated in degrees $(1,0)$ and $(0,1)$. Therefore $K$ is a finite bigraded module over $\mathcal A/\mathfrak m^c\mathcal A$, whose degree-zero component is the Artinian ring $R/\mathfrak m^c$.

The multigraded Hilbert--Serre theorem gives a polynomial
$P_K(X,Y)\in\mathbb Q[X,Y]$ such that
\[
\ell_R(K_{n_1,n_2})=P_K(n_1,n_2)
\qquad(n_1,n_2\gg0).
\]

By the definition of the regrading, $K_{n_1,n_2} = \bigoplus_{i=0}^{n_1-1} G_{i,n_1-i,n_2}$. The filtration $$E_2^{n_2}T_{e_1n_1} \supseteq E_1E_2^{n_2}T_{e_1(n_1-1)} \supseteq\cdots\supseteq E_1^{n_1}E_2^{n_2}T_0$$ has successive quotients $G_{i,n_1-i,n_2}$ for $0\leq i\leq n_1-1$. Therefore,
\[
A(n_1,n_2) = \sum_{i=0}^{n_1-1} \ell_R\left(G_{i,n_1-i,n_2}\right) = \ell_R(K_{n_1,n_2}) = P_K(n_1,n_2)
\]
for $n_1,n_2\gg0$. Hence $h_R(n_1,n_2;N) = P_K(n_1,n_2)+Q(n_2,e_1n_1)$ for all sufficiently large $n_1,n_2$. Thus $h_R$ agrees eventually with a polynomial $P(n_1,n_2)\in\mathbb Q[n_1,n_2]$.

It remains to bound the total degree of $P$. Choose $c\geq1$ such that $\mathfrak m^cS_{e_i}\subseteq E_i$ ($i=1,2$). Since $S$ is a polynomial ring, $(S_{e_1})^{n_1}(S_{e_2})^{n_2}=S_u$ for $u=e_1n_1+e_2n_2$. Consequently, $\mathfrak m^{c(n_1+n_2)}T_u \subseteq E_1^{n_1}E_2^{n_2}T_0$. Since $T_u\cong N^{\binom{u+p-1}{p-1}}$ as an $R$-module, we obtain
\[
\begin{aligned}
0\leq h_R(n_1,n_2;N)
&\leq \ell_R\!\left(T_u/\mathfrak m^{c(n_1+n_2)}T_u\right)\\
&=\binom{u+p-1}{p-1}
\ell_R\!\left(N/\mathfrak m^{c(n_1+n_2)}N\right)\\
&=O\!\left((n_1+n_2)^q\right).
\end{aligned}
\]

Suppose that $P$ had total degree $D>q$, and let $P_D$ denote its homogeneous component of degree $D$. Since $P_D\neq0$, there exist positive integers $a,b$ such that $P_D(a,b)\neq0$. Hence $P(an,bn) = P_D(a,b)n^D+O(n^{D-1})$, which contradicts the estimate $P(an,bn)=O(n^q)$. Therefore, $\deg P\leq q=d+p-1$. This proves the proposition.
\end{proof}

The homogeneous component of total degree $q$ of this polynomial
is written as
\[
\sum_{\substack{a,b\geq0\\a+b=q}}
\frac{\widetilde e_{\mathrm{BR}}
(E_1^{[a]},E_2^{[b]};N)}
{a!\,b!}\,n_1^a n_2^b.
\]
These coefficients are called the mixed Buchsbaum--Rim
multiplicities of $E_1,E_2$ of type $(a,b)$ in degrees
$e_1,e_2$, with coefficients in $N$.
When $N=R$, we omit $N$ from the notation.
For $e_1=e_2=1$, this definition recovers
$e_{\mathrm{BR}}(E_1^{[a]},E_2^{[b]};N)$.

\begin{rem}\label{notation}
For $u=e_1n_1+e_2n_2$, right exactness gives the natural
isomorphism
\[
(S_u/E_1^{n_1}E_2^{n_2})\otimes_RN
\cong
\frac{S_u\otimes_RN}
{\operatorname{Im}\!\left(
E_1^{n_1}E_2^{n_2}\otimes_RN
\longrightarrow S_u\otimes_RN
\right)}.
\]
Thus, with $T=S\otimes_RN$, the function above is precisely
$\ell_R(T_u/E_1^{n_1}E_2^{n_2}T_0)$.
No injectivity of the tensor-induced map is assumed.
\end{rem}

\begin{rem}\label{rem:comparison-graded-notation}
The degree-one conventions above agree with the graded formulation
in \cite[Remark~2.1 and Section~4]{Bedregal-Perez}.
Indeed, take $G=S=\operatorname{Sym}_R(F)$ and
\[
M=S\otimes_RN
=\bigoplus_{v\geq0}M_v
=\bigoplus_{v\geq0}(F^v\otimes_RN),
\]
which has dimension $d+p$. If $E\subseteq F$ and
$\mathcal R(E)=\bigoplus_{n\geq0}\mathcal R_n(E)$ is its Rees
algebra, then $\mathcal R_n(E)=E^n$, and
\[
\begin{aligned}
\ell_R\!\left(M_n/\mathcal R_n(E)M_0\right)
&=\ell_R\!\left(
\frac{F^n\otimes_RN}
{\operatorname{Im}(E^n\otimes_RN\longrightarrow F^n\otimes_RN)}
\right)\\
&=\ell_R\!\left((F^n/E^n)\otimes_RN\right).
\end{aligned}
\]
Likewise, for $E_i\subseteq S_{e_i}$ and $u=e_1n_1+e_2n_2$,
the graded-module expression $\ell_R(M_u/E_1^{n_1}E_2^{n_2}M_0)$ equals
\(
\ell_R\!\left(
\left(S_u/E_1^{n_1}E_2^{n_2}\right)\otimes_RN
\right),
\)
as explained in Remark~\ref{notation}.
\end{rem}

\section{A product formula for mixed Buchsbaum--Rim multiplicities}\label{sec:product}

This section proves the diagonal product formula for mixed
Buchsbaum--Rim multiplicities. We first treat submodules of degree
one, then record its version for submodules in arbitrary homogeneous
degrees.

\begin{lem}\label{lem:mixed-product-formula}
Let $E_1,E_2\subseteq F$ have finite colength. Regarding
$E_1E_2$ as a submodule of
$F^2=\operatorname{Sym}_R^2(F)$, one has
\[
\widetilde e_{\mathrm{BR}}(E_1E_2;N)
=
\sum_{i=0}^{q}
\binom{q}{i}
e_{\mathrm{BR}}
\left(E_1^{[q-i]},E_2^{[i]};N\right).
\]
\end{lem}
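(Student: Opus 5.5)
The plan is to compute the diagonal restriction of the mixed Hilbert polynomial. By Definition~\ref{def:br-degree} with $e=2$ and $E=E_1E_2\subseteq S_2$,
\[
\widetilde e_{\mathrm{BR}}(E_1E_2;N)=q!\lim_{n\to\infty}\frac{\ell_R((S_{2n}/(E_1E_2)^n)\otimes_RN)}{n^q}.
\]
The first step is to observe that, since products are formed inside the commutative algebra $S$, we have $(E_1E_2)^n=E_1^nE_2^n$ as submodules of $S_{2n}$. Consequently
\[
\ell_R\bigl((S_{2n}/(E_1E_2)^n)\otimes_RN\bigr)=h_R(n,n;N),
\]
where $h_R(n_1,n_2;N)$ is the mixed length function of the preceding subsection, with $e_1=e_2=1$.

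Next, I invoke the eventual polynomiality of $h_R$. The remark following the definition of the mixed multiplicities, or Proposition~\ref{prop:mixed-br-arbitrary-degree} with $e_1=e_2=1$, gives a polynomial $P(n_1,n_2)$ of total degree at most $q$ that agrees with $h_R$ for $n_1,n_2\gg0$. The diagonal $(n,n)$ eventually lies in this range, so $h_R(n,n;N)=P(n,n)$ for $n\gg0$. Write $P=P_q+P_{<q}$, where
\[
P_q(n_1,n_2)=\sum_{a+b=q}\frac{e_{\mathrm{BR}}(E_1^{[a]},E_2^{[b]};N)}{a!\,b!}\,n_1^an_2^b
\]
is the homogeneous component of degree $q$ and $P_{<q}$ has total degree less than $q$. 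Then $P(n,n)=P_q(1,1)\,n^q+O(n^{q-1})$, since $P_{<q}(n,n)=O(n^{q-1})$.

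Dividing by $n^q$ and taking the limit, I get
\[
\widetilde e_{\mathrm{BR}}(E_1E_2;N)=q!\,P_q(1,1)=\sum_{i=0}^{q}\frac{q!}{(q-i)!\,i!}\,e_{\mathrm{BR}}(E_1^{[q-i]},E_2^{[i]};N),
\]
which is the asserted identity after setting $a=q-i$ and $b=i$.

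The only delicate point is the identification $(E_1E_2)^n=E_1^nE_2^n$ together with the compatibility of the tensor convention. Both sides are images in $S_{2n}\otimes_RN$ of the same submodule of $S_{2n}$, so Remark~\ref{notation} handles the possible non-injectivity after tensoring. The remaining steps are routine.
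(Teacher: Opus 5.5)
Your proposal is correct and follows essentially the same route as the paper: identify $(E_1E_2)^n=E_1^nE_2^n$ inside $S_{2n}$, evaluate the eventual mixed Hilbert polynomial on the diagonal $n_1=n_2=n$, and compare the coefficients of $n^q$. The one step the paper makes explicit that you leave implicit is that $E_1E_2$ has finite colength in $S_2$: from $\mathfrak m^cF\subseteq E_1\cap E_2$ one gets $\mathfrak m^{2c}S_2\subseteq E_1E_2$, and this is what makes Definition~\ref{def:br-degree} applicable.
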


\begin{proof}
Set $S=\operatorname{Sym}_R(F)$.
First, $E_1E_2$ has finite colength in $S_2$:
if $\mathfrak m^cF\subseteq E_1\cap E_2$, then
$\mathfrak m^{2c}S_2\subseteq E_1E_2$.
Thus $\widetilde e_{\mathrm{BR}}(E_1E_2;N)$ is defined.

By Proposition~\ref{prop:mixed-br-arbitrary-degree}, for
$n_1,n_2\gg0$ one has
\[
\ell_R\!\left(
(S_{n_1+n_2}/E_1^{n_1}E_2^{n_2})\otimes_RN
\right)
=
\sum_{i=0}^{q}
\frac{e_{\mathrm{BR}}(E_1^{[q-i]},E_2^{[i]};N)}
{(q-i)!\,i!}\,
n_1^{q-i}n_2^i
+P_{<q}(n_1,n_2),
\]
where $P_{<q}$ has total degree less than $q$.
Taking $n_1=n_2=n$ yields
\begin{equation}\label{formula0}
\ell_R\!\left(
(S_{2n}/E_1^nE_2^n)\otimes_RN
\right)
=
\frac{n^q}{q!}
\sum_{i=0}^{q}\binom qi
e_{\mathrm{BR}}(E_1^{[q-i]},E_2^{[i]};N)
+O(n^{q-1}).
\end{equation}
On the other hand, the definition of multiplicity in
degree two gives
\begin{equation}\label{formula1}
\ell_R\!\left(
(S_{2n}/(E_1E_2)^n)\otimes_RN
\right)
=
\frac{\widetilde e_{\mathrm{BR}}(E_1E_2;N)}{q!}\,n^q
+O(n^{q-1}).
\end{equation}
Since $(E_1E_2)^n=E_1^nE_2^n$ inside $S$, the left-hand
sides of \eqref{formula0} and \eqref{formula1} agree.
Comparing the coefficients of $n^q$ proves the formula.
\end{proof}

\begin{cor}[Mixed product formula in arbitrary degrees]\label{mixed}
Fix positive integers $e_1,e_2$, and let
$E_i\subseteq S_{e_i}=\operatorname{Sym}_R^{e_i}(F)$ have finite
colength for $i=1,2$. Then, for all positive integers $r,s$,
\begin{equation}\label{formula}
\widetilde e_{\rm BR}(E_1^rE_2^s;N)
=\sum_{i=0}^{q}\binom qi
\widetilde e_{\rm BR}\left(E_1^{[q-i]},E_2^{[i]};N\right)
 r^{q-i}s^i.
\end{equation}
When $e_1=e_2=1$, this becomes
\begin{equation}\label{formula2}
\widetilde e_{\rm BR}(E_1^rE_2^s;N)
=\sum_{i=0}^{q}\binom qi
 e_{\rm BR}\left(E_1^{[q-i]},E_2^{[i]};N\right)
 r^{q-i}s^i.
\end{equation}
\end{cor}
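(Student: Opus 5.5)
The plan mirrors the proof of Lemma~\ref{lem:mixed-product-formula}, with the diagonal specialization $n_1=n_2=n$ replaced by $n_1=rn$, $n_2=sn$. Fix positive integers $r,s$ and set $E=E_1^rE_2^s\subseteq S_{e}$, where $e=re_1+se_2$.

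First, $E$ has finite colength in $S_e$. If $\mathfrak m^cS_{e_i}\subseteq E_i$ for $i=1,2$, then, since $(S_{e_1})^r(S_{e_2})^s=S_e$ in the polynomial ring $S$, we get $\mathfrak m^{c(r+s)}S_e\subseteq E$. Hence $\ell_R((S_e/E)\otimes_RN)<\infty$, and Definition~\ref{def:br-degree} applies; by Proposition~\ref{prop:br-degree-polynomiality},
\[
\ell_R\bigl((S_{en}/E^n)\otimes_RN\bigr)=\frac{\widetilde e_{\mathrm{BR}}(E;N)}{q!}\,n^q+O(n^{q-1}).
\]
Products are formed in the commutative ring $S$, so $E^n=E_1^{rn}E_2^{sn}$ inside $S_{en}$, and $en=e_1(rn)+e_2(sn)$. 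The left side above is therefore exactly $h_R(rn,sn;N)$ from Proposition~\ref{prop:mixed-br-arbitrary-degree}.

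Next, for $n\gg0$ both $rn$ and $sn$ lie in the range where $h_R$ equals its polynomial $P(n_1,n_2)$ of total degree at most $q$. Substituting gives
\[
P(rn,sn)=\frac{n^q}{q!}\sum_{i=0}^{q}\binom qi\,\widetilde e_{\mathrm{BR}}\bigl(E_1^{[q-i]},E_2^{[i]};N\bigr)r^{q-i}s^i+O(n^{q-1}),
\]
because every term of $P$ of total degree below $q$ contributes only $O(n^{q-1})$ after substitution. Comparing the coefficients of $n^q$ in the two expressions yields \eqref{formula}.

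For $e_1=e_2=1$, the remark following Proposition~\ref{prop:mixed-br-arbitrary-degree} identifies $\widetilde e_{\mathrm{BR}}(E_1^{[a]},E_2^{[b]};N)$ with $e_{\mathrm{BR}}(E_1^{[a]},E_2^{[b]};N)$, which gives \eqref{formula2}.

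No step is a real obstacle. The only points to check are that $(E_1^rE_2^s)^n=E_1^{rn}E_2^{sn}$, which is immediate from commutativity of $S$ (the same fact used in Remark~\ref{remark1}), and that the diagonal substitution $(rn,sn)$ eventually enters the polynomial range. The latter holds because $r,s\ge1$.
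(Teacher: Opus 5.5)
Your proposal is correct and follows the paper's proof essentially verbatim: you substitute $(n_1,n_2)=(rn,sn)$ into the eventual mixed Hilbert polynomial, identify the result with the function defining $\widetilde e_{\rm BR}(E_1^rE_2^s;N)$ via $(E_1^rE_2^s)^n=E_1^{rn}E_2^{sn}$, and compare coefficients of $n^q$. Your explicit check that $E_1^rE_2^s$ has finite colength in $S_{re_1+se_2}$ is a detail the paper leaves implicit, and you handle it correctly.
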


\begin{proof}
Write the eventual mixed Hilbert polynomial as $P(X,Y)$. Its
homogeneous component of degree $q$ is
\[
\sum_{i=0}^{q}\frac{\widetilde e_{\rm BR}
(E_1^{[q-i]},E_2^{[i]};N)}{(q-i)!\,i!}X^{q-i}Y^i.
\]
For fixed positive integers $r,s$, substituting $(X,Y)=(rn,sn)$
shows that the leading coefficient of $P(rn,sn)$ is
\[
\frac{1}{q!}\sum_{i=0}^{q}\binom qi
\widetilde e_{\rm BR}(E_1^{[q-i]},E_2^{[i]};N)r^{q-i}s^i.
\]
The same function is the Hilbert function defining
$\widetilde e_{\rm BR}(E_1^rE_2^s;N)$, since
$(E_1^rE_2^s)^n=E_1^{rn}E_2^{sn}$. Comparing leading coefficients
proves \eqref{formula}. If $e_1=e_2=1$, the arbitrary-degree
coefficients are the ordinary mixed Buchsbaum--Rim multiplicities,
which gives \eqref{formula2}.
\end{proof}

\begin{cor}[Product formula for ideals]\label{cor:ideal-product}
Let $I,J$ be $\mathfrak m$-primary ideals of $R$. In the special case
$p=1$ (so that $F=R$ and $q=d$), the preceding formula becomes
\[
e(I^rJ^s;N)=
\sum_{i=0}^{d}\binom di
e\left(I^{[d-i]},J^{[i]};N\right)r^{d-i}s^i
\qquad(r,s\geq0).
\]
In particular,
\[
e(IJ;N)=\sum_{i=0}^{d}\binom di
e\left(I^{[d-i]},J^{[i]};N\right).
\]
\end{cor}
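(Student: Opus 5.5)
This is the special case $p=1$ of Corollary~\ref{mixed}, so the plan is mostly to check that the module-theoretic objects reduce to the classical ideal-theoretic ones. With $p=1$ we have $F=R$ and $S=\operatorname{Sym}_R(R)\cong R[t]$, so $S_n=Rt^n\cong R$ and $q=d+p-1=d$.

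An $\mathfrak m$-primary ideal $I$ is a finite-colength submodule of $F=S_1$. Its powers inside $S$ are $I^n t^n$, and the same holds for products: $E_1^{n_1}E_2^{n_2}=I^{n_1}J^{n_2}t^{n_1+n_2}$. Hence the length functions become
\[
\ell_R\bigl((S_{n_1+n_2}/I^{n_1}J^{n_2})\otimes_RN\bigr)=\ell_R(N/I^{n_1}J^{n_2}N).
\]
Taking leading coefficients, $e_{\mathrm{BR}}(I^{[d-i]},J^{[i]};N)=e(I^{[d-i]},J^{[i]};N)$, which is the remark already made after the definition of mixed multiplicities. Similarly, regarding $I^rJ^s$ as a submodule of $S_{r+s}$,
\[
\widetilde e_{\mathrm{BR}}(I^rJ^s;N)=d!\lim_{n\to\infty}\frac{\ell_R(N/(I^rJ^s)^nN)}{n^d}=e(I^rJ^s;N),
\]
the ordinary Hilbert--Samuel multiplicity of the ideal $I^rJ^s$ on $N$. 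The point to verify is that $\widetilde e_{\mathrm{BR}}$ in degree $r+s$ uses $n^q=n^d$ as normalisation, with no factor coming from the degree. This holds because $S_{(r+s)n}\cong R$ and $q=d$, so the definition literally coincides with the Hilbert--Samuel limit.

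With these identifications, \eqref{formula2} gives the first displayed formula for positive $r,s$. Setting $r=s=1$ gives the second formula; alternatively it follows directly from Lemma~\ref{lem:mixed-product-formula}.

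The only step needing extra care is the boundary cases where $r=0$ or $s=0$, since Corollary~\ref{mixed} was stated only for positive integers.
\begin{itemize}
\item For $r=s=0$, both sides vanish under the convention $e(R;N)=0$, because every term on the right contains a factor $0^d$ with $d\geq1$.
\item For $s=0$ and $r>0$, the claim reads $e(I^r;N)=r^d e(I;N)$. This holds because $(I^r)^n=I^{rn}$, exactly as in Remark~\ref{remark1}.
\item The case $r=0$, $s>0$ is symmetric.
\end{itemize}
In these boundary cases we also need $e(I^{[d]},J^{[0]};N)=e(I;N)$. This follows from the polynomial expansion: fix $n_2$ in the polynomial range and compare with the single-ideal Hilbert function, using the standard fact that multiplying by the fixed ideal $J^{n_2}$ changes the length by $O(n_1^{d-1})$. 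That is the shift argument of Lemma~\ref{lem:br-degree-shift} in the case $p=1$.

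Some care is needed here, but nothing beyond results already established in the paper.
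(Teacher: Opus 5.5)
Your proposal is correct and follows the paper's route. You specialize to $p=1$, note that $S_n\cong R$ and $q=d$ so that $\widetilde e_{\mathrm{BR}}$ and the mixed Buchsbaum--Rim multiplicities become the Hilbert--Samuel ones, and read the formula off \eqref{formula2}. Your separate treatment of $r=0$ or $s=0$ is a genuine addition, because the paper's proof only cites \eqref{formula2}, which is stated for positive $r,s$; the identity $e(I^{[d]},J^{[0]};N)=e(I;N)$ needed there follows most directly from the sandwich $I^{n_1+c}N\subseteq I^{n_1}J^{n_2}N\subseteq I^{n_1}N$, where $\mathfrak m^c\subseteq J^{n_2}$, rather than from Lemma~\ref{lem:br-degree-shift} as literally stated.
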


\begin{proof}
When $p=1$, $S=\operatorname{Sym}_R(R)=R[t]$ and each graded
component $S_n$ is a free $R$-module of rank one.  Thus the
Buchsbaum--Rim multiplicities and the mixed Buchsbaum--Rim
multiplicities specialize to the usual Hilbert--Samuel mixed
multiplicities of $I$ and $J$.  The identities follow from
\eqref{formula2}.
\end{proof}

\begin{lem}\label{lem:mixed-br-basic-properties}

\begin{enumerate}
\item For finite-colength submodules $E_1,E_2\subseteq F$ and
positive integers $u,v$, regard $E_1^u$ and $E_2^v$ as submodules of
$S_u$ and $S_v$, respectively. Then, for $0\leq i\leq q$,
\[
\widetilde e_{\rm BR}\!\left((E_1^u)^{[q-i]},(E_2^v)^{[i]}\right)
=u^{q-i}v^i e_{\rm BR}\!\left(E_1^{[q-i]},E_2^{[i]}\right).
\]
\item For every finite-colength submodule $E\subseteq F$ and
$0\leq i\leq q$,
\[
e_{\rm BR}\!\left(E^{[q-i]},E^{[i]}\right)=e_{\rm BR}(E).
\]
\item If $K_1\subseteq M_1$ and $K_2\subseteq M_2$ are
finite-colength submodules of $F$, with $K_j$ a reduction of $M_j$
for $j=1,2$, then
\[
e_{\rm BR}\!\left(K_1^{[q-i]},K_2^{[i]}\right)
=e_{\rm BR}\!\left(M_1^{[q-i]},M_2^{[i]}\right)
\qquad(0\leq i\leq q).
\]
\end{enumerate}
\end{lem}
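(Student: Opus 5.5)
All three parts follow by comparing Hilbert functions directly and then reading off leading homogeneous components.

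\emph{Part (1).} Apply Proposition~\ref{prop:mixed-br-arbitrary-degree} with $e_1=u$, $e_2=v$ and the submodules $E_1^u\subseteq S_u$, $E_2^v\subseteq S_v$. Inside $S$ we have $(E_1^u)^{n_1}(E_2^v)^{n_2}=E_1^{un_1}E_2^{vn_2}$, so for $n_1,n_2\gg0$ the defining length function is
\[
\ell_R\bigl(S_{un_1+vn_2}/E_1^{un_1}E_2^{vn_2}\bigr)=P(un_1,vn_2).
\]
Here $P$ is the degree-one mixed polynomial of $E_1,E_2$. Substituting $X=un_1$, $Y=vn_2$ into the degree-$q$ component $\sum e_{\rm BR}(E_1^{[q-i]},E_2^{[i]})X^{q-i}Y^i/((q-i)!\,i!)$ multiplies the coefficient of $n_1^{q-i}n_2^i$ by $u^{q-i}v^i$. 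Two polynomials agreeing on a product of cofinite sets of integers are equal, so their homogeneous components of degree $q$ agree. This proves (1), in the same way as Remark~\ref{remark1}.

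\emph{Part (2).} With $E_1=E_2=E$ we have $E^{n_1}E^{n_2}=E^{n_1+n_2}$. Hence the mixed function equals $\ell_R(S_{n_1+n_2}/E^{n_1+n_2})$, which for large arguments is the Buchsbaum--Rim polynomial $Q(n_1+n_2)$. Its degree-$q$ part is
\[
\frac{e_{\rm BR}(E)}{q!}(n_1+n_2)^q=\sum_i \frac{e_{\rm BR}(E)}{(q-i)!\,i!}\,n_1^{q-i}n_2^i.
\]
Comparing coefficients gives $e_{\rm BR}(E^{[q-i]},E^{[i]})=e_{\rm BR}(E)$.

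\emph{Part (3).} I would avoid Lemma~\ref{Coro16.5.7SH}, which needs formal equidimensionality, and argue with reduction numbers. Choose $r$ with $M_j^{n+1}=K_jM_j^n$ for all $n\ge r$. Iterating gives $M_j^{n+r}=K_j^nM_j^r\subseteq K_j^n$ for every $n\ge0$. Therefore
\[
M_1^{n_1+r}M_2^{n_2+r}\subseteq K_1^{n_1}K_2^{n_2}\subseteq M_1^{n_1}M_2^{n_2}.
\]
Passing to the quotients of $S_{n_1+n_2}$ sandwiches the lengths as follows (the middle length lives in $S_{n_1+n_2}$):
\[
P_M(n_1,n_2)\le P_K(n_1,n_2)\le \ell_R\bigl(S_{n_1+n_2+2r}/M_1^{n_1+r}M_2^{n_2+r}\bigr)=P_M(n_1+r,n_2+r).
\]
Here $P_K$ and $P_M$ are the eventual mixed polynomials.

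Now observe that $P_M(n_1+r,n_2+r)-P_M(n_1,n_2)$ has total degree $<q$, since translation does not change the top homogeneous part. Hence $P_K-P_M$ is bounded in absolute value by a polynomial of degree $<q$ on the region $n_1,n_2\gg0$. Write $D_q$ for the degree-$q$ component of $P_K-P_M$. If $D_q\neq0$, pick positive integers $a,b$ with $D_q(a,b)\ne0$. Evaluating along $(an,bn)$ then gives growth of order $n^q$, which is a contradiction, as at the end of the proof of Proposition~\ref{prop:mixed-br-arbitrary-degree}. So the degree-$q$ components coincide, proving (3).

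The only delicate point is this final comparison in (3): the sandwich controls only values, not coefficients. It is handled by the same ray-evaluation argument the paper already uses to bound degrees.
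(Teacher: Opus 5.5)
Your parts (1) and (2) are correct, and a bit more direct than the paper. You identify the eventual polynomial of $(E_1^u,E_2^v)$ as $P(uX,vY)$ and that of $(E,E)$ as $Q(X+Y)$, while the paper goes through the product formula (Corollary~\ref{mixed}, plus Remark~\ref{remark1} for (2)) and compares coefficients in $(r,s)$.

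Part (3), however, has a genuine gap. The inclusion $M_j^{n+r}=K_j^nM_j^r\subseteq K_j^n$ is false as written: the left side lies in $S_{n+r}$ and $K_j^n$ lies in $S_n$. What is true is $M_j^{n+r}\subseteq K_j^nS_r$. So the correct sandwich is $M_1^{n_1+r}M_2^{n_2+r}\subseteq K_1^{n_1}K_2^{n_2}S_{2r}$, and it yields only
\[
\ell_R\bigl(S_{n_1+n_2+2r}/M_1^{n_1+r}M_2^{n_2+r}\bigr)\geq\ell_R\bigl(S_{n_1+n_2+2r}/K_1^{n_1}K_2^{n_2}S_{2r}\bigr).
\]
Nothing formal relates the right-hand side to $P_K(n_1,n_2)=\ell_R(S_{n_1+n_2}/K_1^{n_1}K_2^{n_2})$. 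Unlike the ideal case, the two quotients live in different ambient modules, and there is no natural surjection between them. Moreover, $h\mapsto\ell_R(S_{m+h}/US_h)$ can decrease. For example, over $R=k[[x]]$ with $p=2$, the submodule $U=Rt_1^2+Rt_2^2+Rxt_1t_2\subseteq S_2$ has $\ell_R(S_2/U)=1$ but $US_1=S_3$. So your upper bound $P_K(n_1,n_2)\leq P_M(n_1+r,n_2+r)$ is unproved, and without it the ray-evaluation argument has nothing to act on. The delicate point is this bound, not the final extraction of coefficients.

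What is missing is a shift estimate in the spirit of Lemma~\ref{lem:br-degree-shift}. One repair is to use the same-degree sandwich
\[
K_1^{n_1}K_2^{n_2}\subseteq M_1^{n_1}M_2^{n_2}\subseteq K_1^{n_1-r}K_2^{n_2-r}S_{2r}
\]
and then prove $\ell_R\bigl(K_1^{a}K_2^{b}S_{2r}/K_1^{a+r}K_2^{b+r}\bigr)=O((a+b)^{q-1})$. This needs a bigraded version of the fiber-dimension bound of Lemma~\ref{lem:br-degree-fiber-bound}; after that, your ray argument goes through. The paper avoids the bigraded estimate altogether, in three steps:
\begin{enumerate}
\item It shows that $K_1^aK_2^b$ is a reduction of $M_1^aM_2^b$ inside $S_{a+b}$.
\item It proves reduction invariance of the one-variable $\widetilde e_{\rm BR}$, using $A^n\subseteq B^n\subseteq A^{n-h}S_{eh}$ (all in degree $en$) together with Lemma~\ref{lem:br-degree-shift}.
\item It recovers the mixed coefficients from Corollary~\ref{mixed} by comparing coefficients of $a^{q-i}b^i$.
\end{enumerate}
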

\begin{proof}
\smallskip
\noindent\textup{(i)}
For every pair of positive integers $r,s$, Corollary~\ref{mixed}
and the identity
$(E_1^u)^r(E_2^v)^s=E_1^{ur}E_2^{vs}$ give
\[
\begin{aligned}
\sum_{i=0}^{q}\binom{q}{i}
\widetilde e_{\rm BR}
\left((E_1^u)^{[q-i]},(E_2^v)^{[i]}\right)
r^{q-i}s^i&= \widetilde e_{\rm BR}(E_1^{ur}E_2^{vs})\\
=&
\sum_{i=0}^{q}\binom qi
e_{\rm BR}\left(E_1^{[q-i]},E_2^{[i]}\right)
u^{q-i}v^i r^{q-i}s^i.
\end{aligned}
\]
Since this equality holds for all positive integers $r,s$,
it is a polynomial identity. Comparing coefficients of
$r^{q-i}s^i$ proves \textup{(i)}.

\smallskip
\noindent\textup{(ii)}
By Corollary~\ref{mixed} and Remark~\ref{remark1},
\[
\begin{aligned}
\sum_{i=0}^{q}\binom qi
e_{\rm BR}\left(E_1^{[q-i]},E_1^{[i]}\right)
r^{q-i}s^i
&=\widetilde e_{\rm BR}(E_1^{r+s})\\
&=e_{\rm BR}(E_1)(r+s)^q\\
&=\sum_{i=0}^{q}\binom qi
e_{\rm BR}(E_1)r^{q-i}s^i.
\end{aligned}
\]
Comparing coefficients proves \textup{(ii)}.

\smallskip
\noindent\textup{(iii)}
We first establish reduction invariance for multiplicities
in arbitrary degree. Let $A\subseteq B\subseteq S_e$ have
finite colength, and suppose that $A$ is a reduction of $B$.
Choose $h\geq1$ such that
\(
B^n=A^{n-h}B^h
\,\,(n\geq h).
\)
Since $A^n\subseteq B^n\subseteq A^{n-h}S_{eh}$, we obtain
\[
\ell_R(S_{en}/A^{n-h}S_{eh})
\leq
\ell_R(S_{en}/B^n)
\leq
\ell_R(S_{en}/A^n).
\]
Applying Lemma~\ref{lem:br-degree-shift} to $A$, with
coefficient module $R$ and index $n-h$, gives
\[
\ell_R(S_{en}/A^{n-h}S_{eh})
=
\ell_R(S_{en}/A^n)+O(n^{q-1}).
\]
Thus the two outer terms have the same normalized leading
coefficient. Dividing by $n^q/q!$ and taking limits yields
\[
\widetilde e_{\rm BR}(A)=\widetilde e_{\rm BR}(B).
\]

Now fix positive integers $r,s$. Since $K_1$ is a reduction
of $M_1$ and $K_2$ is a reduction of $M_2$, for $n\gg0$ we have
\[
M_1^{r(n+1)}=K_1^rM_1^{rn},
\qquad
M_2^{s(n+1)}=K_2^sM_2^{sn}.
\]
Consequently,
\[
(M_1^rM_2^s)^{n+1}
=(K_1^rK_2^s)(M_1^rM_2^s)^n.
\]
Hence $K_1^rK_2^s$ is a reduction of $M_1^rM_2^s$, both
regarded as submodules of $S_{r+s}$.

These modules have finite colength: choosing $a,b\geq1$
with $\mathfrak m^aF\subseteq K_1$ and
$\mathfrak m^bF\subseteq K_2$, we have
\[
\mathfrak m^{ar+bs}S_{r+s}
\subseteq K_1^rK_2^s
\subseteq M_1^rM_2^s.
\]
The reduction invariance proved above therefore gives
\[
\widetilde e_{\rm BR}(K_1^rK_2^s)
=\widetilde e_{\rm BR}(M_1^rM_2^s).
\]
By Corollary~\ref{mixed}, this equality becomes
\[
\begin{aligned}
&\sum_{i=0}^{q}\binom qi
e_{\rm BR}\left(K_1^{[q-i]},K_2^{[i]}\right)r^{q-i}s^i=
\sum_{i=0}^{q}\binom qi
e_{\rm BR}\left(M_1^{[q-i]},M_2^{[i]}\right)r^{q-i}s^i.
\end{aligned}
\]
Since it holds for all positive integers $r,s$, comparison
of coefficients proves \textup{(iii)}.
\end{proof}

\section{Minkowski-type inequalities for modules}
\label{sec:minkowski}

First, we consider a natural analogue of one of the Minkowski
inequalities, adapting \eqref{CM1} to finite-colength submodules.
For $E_1,E_2\subseteq F$ of finite colength, the coefficient-module version
of the proposed power inequality is
\begin{equation}\label{CM3N}
e_{\rm BR}\left(E_1^{[q-i]},E_2^{[i]};N\right)^q
\leq e_{\rm BR}(E_1;N)^{q-i}e_{\rm BR}(E_2;N)^i,
\qquad 1\leq i\leq q-1.
\end{equation}

The following examples show that this inequality fails in general, in dimensions one, two, and three.
\subsection{Failure of the coefficientwise power inequalities}
\begin{ex}\label{ex:mixed-br-teissier-failure}
{\rm The classical quadratic Teissier inequality does not extend
to mixed Buchsbaum--Rim multiplicities under the finite-colength
hypothesis alone.

Let $R=k[[x]]$, let $F=R^2$, and set $E_1=xF$ and $E_2=x^2F$.
Then $d+p-1=2$. For $S=\operatorname{Sym}_R(F)$, one has
\[
E_1^{n_1}E_2^{n_2}
=x^{n_1+2n_2}S_{n_1+n_2},
\]
and consequently
\[
\ell_R(S_{n_1+n_2}/E_1^{n_1}E_2^{n_2})
=(n_1+2n_2)(n_1+n_2+1).
\]
Comparing the quadratic terms gives
\[
e_{\mathrm{BR}}(E_1)=2,\qquad
e_{\mathrm{BR}}(E_2)=4,\qquad
e_{\mathrm{BR}}(E_1^{[1]},E_2^{[1]})=3.
\]
Hence
\[
e_{\mathrm{BR}}(E_1^{[1]},E_2^{[1]})^2
=9>8
=e_{\mathrm{BR}}(E_1)e_{\mathrm{BR}}(E_2).
\]}
\end{ex}

\begin{ex}\label{ex:failure-teissier-modules}
{\rm The direct analogue of Teissier's inequality fails for mixed
Buchsbaum--Rim multiplicities, even over a regular local ring.

Let $R=k[[x,y,z]]$, with maximal ideal $\mathfrak m=(x,y,z)$,
and let $F=R^2$ and $N=R$. Set
\[
E_1=\mathfrak mF,\qquad E_2=\mathfrak m^2F.
\]
Both modules have finite colength in $F$, and
$q=\dim R+\operatorname{rank}F-1=4$.
Writing $S=\operatorname{Sym}_R(F)$, we have
\[
E_1^{n_1}E_2^{n_2}
=\mathfrak m^{n_1+2n_2}S_{n_1+n_2}.
\]
Since $S_{n_1+n_2}$ is free of rank $n_1+n_2+1$ and
$\ell_R(R/\mathfrak m^t)=\binom{t+2}{3}$, it follows that
\[
h_R(n_1,n_2;R)
=(n_1+n_2+1)\binom{n_1+2n_2+2}{3}.
\]
Its homogeneous component of total degree four is
\[
\frac{(n_1+n_2)(n_1+2n_2)^3}{6}
=
\frac{
n_1^4+7n_1^3n_2+18n_1^2n_2^2
+20n_1n_2^3+8n_2^4
}{6}.
\]
Comparing this expression with
\(
\sum_{i=0}^{4}
\frac{
e_{\mathrm{BR}}(E_1^{[4-i]},E_2^{[i]})
}{(4-i)!\,i!}\,
n_1^{4-i}n_2^i,
\)
we obtain
\[
e_{\mathrm{BR}}(E_1)=4,\qquad
e_{\mathrm{BR}}(E_2)=32,\qquad
e_{\mathrm{BR}}(E_1^{[3]},E_2^{[1]})=7.
\]
Consequently,
\[
e_{\mathrm{BR}}(E_1^{[3]},E_2^{[1]})^4
=7^4=2401
>
2048=4^3\cdot32
=e_{\mathrm{BR}}(E_1)^3e_{\mathrm{BR}}(E_2).
\]
Thus the inequality
\[
e_{\mathrm{BR}}(E_1^{[q-i]},E_2^{[i]})^q
\leq
e_{\mathrm{BR}}(E_1)^{q-i}e_{\mathrm{BR}}(E_2)^i
\]
does not hold in general for submodules of finite colength.}
\end{ex}

\begin{ex}\label{ex:diagonal-modules-teissier-failure}
{\rm The analogue of Teissier's inequality fails even for diagonal
modules defined by distinct integrally closed primary ideals
over a two-dimensional regular local ring.

Let $R=k[[x,y]]$, with maximal ideal $\mathfrak m=(x,y)$,
and let $F=Rf_1\oplus Rf_2$. Set
\[
E_1=\mathfrak m f_1\oplus\mathfrak m^2f_2,
\qquad
E_2=\mathfrak m^3f_1\oplus\mathfrak m^4f_2.
\]
The four coordinate ideals are distinct, integrally closed,
and $\mathfrak m$-primary. Moreover,
$q=\dim R+\operatorname{rank}F-1=3$.

Identify $S=\operatorname{Sym}_R(F)$ with $R[t_1,t_2]$.
For $0\leq k\leq n_1+n_2$, the coefficient ideal of
$t_1^kt_2^{n_1+n_2-k}$ in $E_1^{n_1}E_2^{n_2}$ is
$\mathfrak m^{2n_1+4n_2-k}$. Indeed, every admissible
decomposition $k=i+j$ contributes the ideal
\[
\mathfrak m^{i+2(n_1-i)+3j+4(n_2-j)}
=\mathfrak m^{2n_1+4n_2-k}.
\]
Consequently,
\[
h_R(n_1,n_2;R)
=
\sum_{k=0}^{n_1+n_2}
\ell_R(R/\mathfrak m^{2n_1+4n_2-k})
=
\sum_{k=0}^{n_1+n_2}
\binom{2n_1+4n_2-k+1}{2}.
\]
Its homogeneous component of total degree three is
\[
\frac{(2n_1+4n_2)^3-(n_1+3n_2)^3}{6}
=
\frac{
7n_1^3+39n_1^2n_2+69n_1n_2^2+37n_2^3
}{6}.
\]
Comparing coefficients with
\[
\sum_{i=0}^{3}
\frac{e_{\mathrm{BR}}(E_1^{[3-i]},E_2^{[i]})}
{(3-i)!\,i!}\,n_1^{3-i}n_2^i,
\]
we obtain
\[
\begin{aligned}
e_{\mathrm{BR}}(E_1)&=7,
&
e_{\mathrm{BR}}(E_2)&=37,\\
e_{\mathrm{BR}}(E_1^{[2]},E_2^{[1]})&=13,
&
e_{\mathrm{BR}}(E_1^{[1]},E_2^{[2]})&=23.
\end{aligned}
\]
Thus both inequalities fail:
\[
13^3=2197>1813=7^2\cdot37,
\qquad
23^3=12167>9583=7\cdot37^2.
\]
In particular, requiring the coordinate ideals to be distinct
and integrally closed does not suffice to guarantee
\[
e_{\mathrm{BR}}(E_1^{[q-i]},E_2^{[i]})^q
\leq
e_{\mathrm{BR}}(E_1)^{q-i}e_{\mathrm{BR}}(E_2)^i.
\]}
\end{ex}

\begin{ex}\label{ex:non-diagonal-teissier-failure}
{\rm The analogue of Teissier's inequality also fails for
finite-colength submodules that cannot be diagonalized
by a change of basis of the ambient free module.

Let $R=k[[x,y]]$, with maximal ideal $\mathfrak m=(x,y)$,
let $F=Rf_1\oplus Rf_2$, and take $N=R$. Consider
\[
E_1=
\operatorname{Im}
\begin{pmatrix}
x&y&0\\
0&x&y
\end{pmatrix},
\qquad
E_2=
\operatorname{Im}
\begin{pmatrix}
x^2&y^2&0\\
0&x^2&y^2
\end{pmatrix}.
\]
Their maximal-minor ideals are
$(x^2,xy,y^2)$ and $(x^4,x^2y^2,y^4)$, respectively.
Both are $\mathfrak m$-primary, so
$\ell_R(F/E_j)<\infty$ for $j=1,2$.

Each module has exactly three minimal generators.
Indeed, the displayed generators have linearly independent
initial forms in degrees one and two, respectively.
Moreover, $E_j\subseteq\mathfrak mF$.
If either module were diagonal in some basis of $F$, its
two coordinate ideals would therefore be proper
$\mathfrak m$-primary ideals. Each would require at least
two generators, contradicting $\mu_R(E_j)=3$.
Thus neither module is diagonalizable.

We next determine their reductions. For elements $a,b\in R$,
write
\[
M=(a,b)F,\qquad
E=R(af_1)+R(bf_1+af_2)+R(bf_2).
\]
Inside $S=\operatorname{Sym}_R(F)=R[t_1,t_2]$, put
\(
A=at_1,\,\, B=bt_1,\,\, C=at_2,\,\, D=bt_2.
\)
Then $M$ is generated by $A,B,C,D$, whereas $E$ is
generated by $A,B+C,D$. Since $BC=AD$, we have
\[
\begin{aligned}
BC&\in EM,\\
B^2&=B(B+C)-AD\in EM,\\
C^2&=C(B+C)-AD\in EM.
\end{aligned}
\]
Every remaining quadratic product of the generators
contains $A$ or $D$. Hence $M^2=EM$, proving that
$E$ is a reduction of $M$.

Applying this observation with $(a,b)=(x,y)$ shows that
$E_1$ is a reduction of $\mathfrak mF$.
With $(a,b)=(x^2,y^2)$, it shows that $E_2$ is a reduction
of $(x^2,y^2)F$.
Furthermore,
\[
(\mathfrak m^2)^2=(x^2,y^2)\mathfrak m^2,
\]
so $(x^2,y^2)F$ is a reduction of $\mathfrak m^2F$.
By transitivity, $E_2$ is a reduction of $\mathfrak m^2F$.

By invariance of mixed Buchsbaum--Rim multiplicities under
reductions in each argument, it remains to compute the
multiplicities of $\mathfrak mF$ and $\mathfrak m^2F$.
Their length function is
\[
\ell_R\!\left(
S_{n_1+n_2}/
(\mathfrak mF)^{n_1}(\mathfrak m^2F)^{n_2}
\right)
=
(n_1+n_2+1)\binom{n_1+2n_2+1}{2}.
\]
Its homogeneous component of degree three is
\[
\frac{(n_1+n_2)(n_1+2n_2)^2}{2}
=
\frac{n_1^3+5n_1^2n_2+8n_1n_2^2+4n_2^3}{2}.
\]
Comparing normalized coefficients yields
\[
\begin{aligned}
e_{\mathrm{BR}}(E_1)&=3,
&
e_{\mathrm{BR}}(E_2)&=12,\\
e_{\mathrm{BR}}(E_1^{[2]},E_2^{[1]})&=5,
&
e_{\mathrm{BR}}(E_1^{[1]},E_2^{[2]})&=8.
\end{aligned}
\]
Since $q=\dim R+\operatorname{rank}F-1=3$, both proposed
inequalities fail:
\[
e_{\mathrm{BR}}(E_1^{[2]},E_2^{[1]})^3
=125>108
=e_{\mathrm{BR}}(E_1)^2e_{\mathrm{BR}}(E_2),
\]
and
\[
e_{\mathrm{BR}}(E_1^{[1]},E_2^{[2]})^3
=512>432
=e_{\mathrm{BR}}(E_1)e_{\mathrm{BR}}(E_2)^2.
\]}
\end{ex}

\subsection{Log-convexity and the product-root inequality}

We next consider the proposed log-convexity analogue of
\eqref{CM4}. For finite-colength submodules $E_1,E_2\subseteq F$,
the proposed inequality is
$$
\begin{aligned}
e_{\rm BR}\left(E_1^{[q-i]},E_2^{[i]};N\right)^2
&\leq e_{\rm BR}\left(E_1^{[q-i-1]},E_2^{[i+1]};N\right)\cdot e_{\rm BR}\left(E_1^{[q-i+1]},E_2^{[i-1]};N\right),
\,\, 1\leq i\leq q-1.
\end{aligned}
$$

\subsubsection{Counterexamples to log-convexity and the product-root inequality}

The examples below show that neither log-convexity nor the product-root inequality holds in general.

\begin{ex}\label{ex:failure-log-convexity-dim-three}
{\rm Mixed Buchsbaum--Rim multiplicities need not satisfy the
Teissier log-convexity inequalities, even over a regular local
ring of dimension three.

Let $R=k[[x,y,z]]$, with maximal ideal $\mathfrak m=(x,y,z)$,
and set $F=R^2$, $N=R$, $E_1=\mathfrak mF$, and
$E_2=\mathfrak m^2F$. Both modules have finite colength in $F$.
Here $d=3$, $p=2$, and $q=d+p-1=4$.

Writing $S=\operatorname{Sym}_R(F)=R[t_1,t_2]$, we have
\[
E_1^{n_1}E_2^{n_2}
=\mathfrak m^{n_1+2n_2}S_{n_1+n_2}.
\]
Since $S_{n_1+n_2}$ is free of rank $n_1+n_2+1$ and
$\ell_R(R/\mathfrak m^a)=\binom{a+2}{3}$, it follows that
\[
h_R(n_1,n_2;R)
=
\ell_R\left(
\frac{S_{n_1+n_2}}{E_1^{n_1}E_2^{n_2}}
\right)
=
(n_1+n_2+1)\binom{n_1+2n_2+2}{3}.
\]
Its homogeneous component of degree four is
\[
\begin{aligned}
[h_R]_4
&=\frac{(n_1+n_2)(n_1+2n_2)^3}{6}\\
&=\frac{
n_1^4+7n_1^3n_2+18n_1^2n_2^2
+20n_1n_2^3+8n_2^4}{6}.
\end{aligned}
\]
Set
\[
c_i=e_{\rm BR}\left(E_1^{[4-i]},E_2^{[i]};R\right),
\qquad 0\leq i\leq4.
\]
By definition,
\[
[h_R]_4
=\sum_{i=0}^4\frac{c_i}{(4-i)!\,i!}n_1^{4-i}n_2^i.
\]
Comparing coefficients gives
\[
(c_0,c_1,c_2,c_3,c_4)=(4,7,12,20,32).
\]
Consequently,
\[
\begin{aligned}
c_1^2&=49>48=c_0c_2,\\
c_2^2&=144>140=c_1c_3,\\
c_3^2&=400>384=c_2c_4.
\end{aligned}
\]
Thus the inequalities $c_i^2\leq c_{i-1}c_{i+1}$ fail for
every $i=1,2,3$, even for modules of the form $IF$ and $JF$
with $I,J$ $\mathfrak m$-primary.}
\end{ex}

\begin{ex}\label{ex:failure-minkowski-modules}
{\rm The following example shows that both the inequalities
\[
e_{\rm BR}\left(E_1^{[q-i]},E_2^{[i]};N\right)
e_{\rm BR}\left(E_1^{[i]},E_2^{[q-i]};N\right)
\leq e_{\rm BR}(E_1;N)e_{\rm BR}(E_2;N)
\]
and
\[
\widetilde e_{\rm BR}(E_1E_2;N)^{1/q}
\leq e_{\rm BR}(E_1;N)^{1/q}
   +e_{\rm BR}(E_2;N)^{1/q},
\qquad q=d+p-1,
\]
can fail, even over a regular local ring of dimension three.

Let $R=k[[x,y,z]]$, with maximal ideal $\mathfrak m=(x,y,z)$,
and set
\(
F=R^2,\,\, N=R,\,\,
E_1=\mathfrak mF,\,\, E_2=\mathfrak m^2F.
\)
Both modules have finite colength in $F$, and $q=4$.
Writing $S=\operatorname{Sym}_R(F)=R[t_1,t_2]$, we obtain
\(
E_1^{n_1}E_2^{n_2}
=\mathfrak m^{n_1+2n_2}S_{n_1+n_2}.
\)
Therefore,
\[
h_R(n_1,n_2;R)
=(n_1+n_2+1)\binom{n_1+2n_2+2}{3},
\]
whose homogeneous component of degree four is
\[
[h_R]_4
=\frac{
n_1^4+7n_1^3n_2+18n_1^2n_2^2
+20n_1n_2^3+8n_2^4}{6}.
\]
Setting
\[
e_i=e_{\rm BR}\left(E_1^{[4-i]},E_2^{[i]};R\right),
\qquad 0\leq i\leq4,
\]
and comparing coefficients with
$\sum_{i=0}^4 e_i n_1^{4-i}n_2^i/((4-i)!\,i!)$ gives
\[
(e_0,e_1,e_2,e_3,e_4)=(4,7,12,20,32).
\]
In particular,
\[
e_1e_3=140>128=e_0e_4,
\qquad
e_2^2=144>128=e_0e_4.
\]
Thus the first proposed inequality fails for every
$i=1,2,3$.

Moreover, the product formula yields
\[
\widetilde e_{\rm BR}(E_1E_2;R)
=\sum_{i=0}^{4}\binom4i e_i
=4+28+72+80+32
=216.
\]
By the strict concavity of $t\mapsto t^{3/4}$,
\[
\frac{1+2^{3/4}}2
<
\left(\frac32\right)^{3/4}.
\]
Hence
\[
\left(4^{1/4}+32^{1/4}\right)^4
=4\left(1+2^{3/4}\right)^4
<64\left(\frac32\right)^3
=216.
\]
Consequently,
\[
\widetilde e_{\rm BR}(E_1E_2;R)^{1/4}
>
e_{\rm BR}(E_1;R)^{1/4}
+e_{\rm BR}(E_2;R)^{1/4},
\]
which disproves the proposed Minkowski-type inequality.}
\end{ex}

\subsubsection{An equality criterion for the product-root inequality}

\begin{prop}\label{prop:root-equality-criterion}
Let $E_1,E_2\subsetneq F$ have finite colength.
The following conditions are equivalent:
\begin{enumerate}
\item[(a)]
All mixed multiplicities
$e_{\rm BR}(E_1^{[q-i]},E_2^{[i]})$ are positive,
and there exist positive integers $r,s$ such that
\[
\frac{
e_{\rm BR}\left(E_1^{[q-i]},E_2^{[i]}\right)
}{
e_{\rm BR}\left(E_1^{[q-i+1]},E_2^{[i-1]}\right)
}
=\frac rs
\qquad(1\leq i\leq q).
\]
\item[(b)]
For every $0\leq i\leq q$,
\[
e_{\rm BR}\left(E_1^{[q-i]},E_2^{[i]}\right)
=
e_{\rm BR}(E_1)^{(q-i)/q}
e_{\rm BR}(E_2)^{i/q}.
\]
\end{enumerate}
Either condition implies
\[
\widetilde e_{\rm BR}(E_1E_2)^{1/q}
=
e_{\rm BR}(E_1)^{1/q}+e_{\rm BR}(E_2)^{1/q}.
\]
Conversely, if
\[
e_{\rm BR}\left(E_1^{[q-i]},E_2^{[i]}\right)
\leq
e_{\rm BR}(E_1)^{(q-i)/q}
e_{\rm BR}(E_2)^{i/q}
\qquad(1\leq i\leq q-1),
\]
then this equality implies conditions \textup{(a)} and
\textup{(b)}.
\end{prop}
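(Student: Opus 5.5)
Write $a_i=e_{\rm BR}(E_1^{[q-i]},E_2^{[i]})$, so $a_0=e_{\rm BR}(E_1)$ and $a_q=e_{\rm BR}(E_2)$. Since $E_1,E_2\subsetneq F$ have finite colength, we expect $a_0,a_q>0$. This holds because, after reducing modulo a minimal prime of maximal dimension, the Buchsbaum--Rim multiplicity of a proper finite-colength submodule is positive. If this is not available directly, positivity in (b) can be read off from the hypotheses themselves. The argument is then purely about sequences of positive reals, together with the product formula of Lemma~\ref{lem:mixed-product-formula}.

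\textbf{(a)$\Rightarrow$(b).} Put $\lambda=r/s$. From $a_i=\lambda a_{i-1}$ for $1\le i\le q$ we get $a_i=\lambda^i a_0$. In particular $a_q=\lambda^q a_0$, so $\lambda=(a_q/a_0)^{1/q}$. Substituting gives
\[
a_i=a_0\,(a_q/a_0)^{i/q}=a_0^{(q-i)/q}a_q^{i/q},
\]
which is (b).

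\textbf{(b)$\Rightarrow$(a).} By (b), every $a_i$ is positive, since $a_0,a_q>0$. The ratio is $a_i/a_{i-1}=(a_q/a_0)^{1/q}$, which is constant in $i$. The one genuine point is producing \emph{integers} $r,s$ with $r/s$ equal to this constant $\lambda$. I would first try to show rationality via the integrality of the $a_i$: all $a_i$ are nonnegative integers by Remark~\ref{rem:br-degree-conventions}. Then $\lambda=a_1/a_0\in\mathbb Q_{>0}$, and we write $\lambda=r/s$ in lowest terms. I expect this to be the only subtle step, and it is handled by integrality of $a_0$ and $a_1$.

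\textbf{Equality in the product-root inequality.} Assuming (b), Lemma~\ref{lem:mixed-product-formula} with $N=R$ gives
\[
\widetilde e_{\rm BR}(E_1E_2)=\sum_{i=0}^q\binom qi a_0^{(q-i)/q}a_q^{i/q}=\bigl(a_0^{1/q}+a_q^{1/q}\bigr)^q .
\]
Taking $q$-th roots yields the displayed equality.

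\textbf{Converse.} Assume $a_i\le a_0^{(q-i)/q}a_q^{i/q}$ for $1\le i\le q-1$, and assume the equality $\widetilde e_{\rm BR}(E_1E_2)^{1/q}=a_0^{1/q}+a_q^{1/q}$. By Lemma~\ref{lem:mixed-product-formula},
\[
\sum_{i=0}^q\binom qi a_i=\sum_{i=0}^q\binom qi a_0^{(q-i)/q}a_q^{i/q}.
\]
The terms with $i=0$ and $i=q$ agree on both sides. Each remaining term on the left is bounded by the corresponding term on the right, and the binomial coefficients are positive. Hence every inequality must be an equality, which is exactly (b). Then (a) follows from the previous step.
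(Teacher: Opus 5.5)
Your proposal is correct and follows the paper's proof essentially step for step. The paper uses the same geometric-sequence argument for (a)$\Rightarrow$(b), the same observation that the common ratio equals the quotient of integers $a_1/a_0$ for (b)$\Rightarrow$(a), the binomial theorem with Lemma~\ref{lem:mixed-product-formula} for the equality, and the same termwise comparison for the converse.

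Two small cautions. Positivity of $a_0,a_q$ must come from the properness of $E_1,E_2$, as the paper simply asserts; your fallback of reading it off from (b) does not work, since with $a_0=0$ condition (b) only forces $a_i=0$ for $i<q$ and cannot yield (a). Also, integrality of the mixed coefficients follows from the integer-valuedness of the bivariate Hilbert polynomial, not from Remark~\ref{rem:br-degree-conventions}, which covers only a single module.
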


\begin{proof}
Suppose \textup{(a)} holds. Iterating the ratio identities gives
\[
e_{\rm BR}\left(E_1^{[q-i]},E_2^{[i]}\right)
=e_{\rm BR}(E_1)\left(\frac rs\right)^i
\qquad(0\leq i\leq q).
\]
Taking $i=q$, we obtain
\[
e_{\rm BR}(E_2)
=e_{\rm BR}(E_1)\left(\frac rs\right)^q.
\]
Since $r/s>0$ and $e_{\rm BR}(E_1)>0$, it follows that
\[
\frac rs
=
\left(\frac{e_{\rm BR}(E_2)}{e_{\rm BR}(E_1)}\right)^{1/q}.
\]
Substituting this expression into the first identity yields,
for every $0\leq i\leq q$,
\[
\begin{aligned}
e_{\rm BR}\left(E_1^{[q-i]},E_2^{[i]}\right)
&=e_{\rm BR}(E_1)
\left(\frac{e_{\rm BR}(E_2)}{e_{\rm BR}(E_1)}\right)^{i/q}\\
&=e_{\rm BR}(E_1)^{(q-i)/q}
  e_{\rm BR}(E_2)^{i/q},
\end{aligned}
\]
which proves \textup{(b)}.

Conversely, suppose \textup{(b)} holds. Since
$e_{\rm BR}(E_1)$ and $e_{\rm BR}(E_2)$ are positive,
all the mixed multiplicities are positive. Moreover, for
$1\leq i\leq q$,
\[
\begin{aligned}
\frac{
e_{\rm BR}\left(E_1^{[q-i]},E_2^{[i]}\right)
}{
e_{\rm BR}\left(E_1^{[q-i+1]},E_2^{[i-1]}\right)
}
&=
\frac{
e_{\rm BR}(E_1)^{(q-i)/q}e_{\rm BR}(E_2)^{i/q}
}{
e_{\rm BR}(E_1)^{(q-i+1)/q}e_{\rm BR}(E_2)^{(i-1)/q}
}\\
&=
\left(\frac{e_{\rm BR}(E_2)}{e_{\rm BR}(E_1)}\right)^{1/q}.
\end{aligned}
\]
Thus all consecutive ratios coincide. Taking $i=1$ shows
that their common value is
\(
\frac{
e_{\rm BR}\left(E_1^{[q-1]},E_2^{[1]}\right)
}{
e_{\rm BR}(E_1)
}.
\) As both the numerator and the denominator are positive
integers, this value can be written as $r/s$ with
$r,s\geq1$, proving \textup{(a)}.

We next prove the asserted equality. Under \textup{(b)},
Lemma~\ref{lem:mixed-product-formula} and the binomial theorem give
\[
\begin{aligned}
\widetilde e_{\rm BR}(E_1E_2)
&=\sum_{i=0}^{q}\binom qi
e_{\rm BR}\left(E_1^{[q-i]},E_2^{[i]}\right)\\
&=\sum_{i=0}^{q}\binom qi
e_{\rm BR}(E_1)^{(q-i)/q}
e_{\rm BR}(E_2)^{i/q}\\
&=\left(
e_{\rm BR}(E_1)^{1/q}
+e_{\rm BR}(E_2)^{1/q}
\right)^q.
\end{aligned}
\]
Taking $q$-th roots yields the desired conclusion.

Finally, assume that the additional coefficient bounds
hold and that
\[
\widetilde e_{\rm BR}(E_1E_2)^{1/q}
=
e_{\rm BR}(E_1)^{1/q}+e_{\rm BR}(E_2)^{1/q}.
\]
Raising both sides to the $q$-th power and applying
Lemma~\ref{lem:mixed-product-formula} and the binomial theorem, we obtain
\[
\sum_{i=0}^{q}\binom qi
\left[
e_{\rm BR}(E_1)^{(q-i)/q}
e_{\rm BR}(E_2)^{i/q}
-
e_{\rm BR}\left(E_1^{[q-i]},E_2^{[i]}\right)
\right]=0.
\]
The bracketed expressions vanish for $i=0,q$ and are
nonnegative for $1\leq i\leq q-1$ by assumption.
Since every binomial coefficient is positive, each
bracketed expression must vanish. This proves
\textup{(b)}, and hence also \textup{(a)}.
\end{proof}

\subsection{Additional bounds for product multiplicities}

\subsubsection{An aggregate bound via annihilator ideals}\label{subsec:annihilator-bound}

The preceding examples show that the coefficientwise estimate
\eqref{CM3N} can fail. A related estimate does hold after summing
the mixed coefficients: it bounds the product multiplicity in terms
of the annihilator ideals of $F/E_1$ and $F/E_2$.

\begin{prop}\label{prop:general-minkowski-br}

Let $E_1,E_2\subsetneq F$ have finite colength, and set
\(
I_j=\operatorname{Ann}_R(F/E_j)=(E_j:_RF)
\,\,(j=1,2).
\)
Then, for all integers $r,s\geq1$,
\[
\begin{aligned}
&\sum_{i=0}^{q}\binom qi
e_{\mathrm{BR}}(E_1^{[q-i]},E_2^{[i]};N)
r^{q-i}s^i\leq \binom qd(r+s)^{p-1}
\left(
r\,e(I_1;N)^{1/d}
+s\,e(I_2;N)^{1/d}
\right)^d.
\end{aligned}
\]
\end{prop}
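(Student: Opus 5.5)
By Corollary~\ref{mixed}, the left-hand side equals $\widetilde e_{\mathrm{BR}}(E_1^rE_2^s;N)$. This is $q!$ times the leading coefficient of $n\mapsto \ell_R((S_{(r+s)n}/E_1^{rn}E_2^{sn})\otimes_RN)$. The plan is to bound this length from above by a length that can be computed from Hilbert--Samuel data of the ideals $I_1,I_2$.

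Since $I_jF\subseteq E_j$, we have $I_j^aS_a=(I_jF)^a\subseteq E_j^a$ inside $S$. Hence
\[
I_1^{rn}I_2^{sn}S_{(r+s)n}\subseteq E_1^{rn}E_2^{sn}.
\]
Tensoring with $N$ and using right exactness (Remark~\ref{notation}) gives
\[
\ell_R\bigl((S_u/E_1^{rn}E_2^{sn})\otimes_RN\bigr)\leq \ell_R\bigl(T_u/I_1^{rn}I_2^{sn}T_u\bigr),
\qquad u=(r+s)n,
\]
where $T_u=S_u\otimes_RN\cong N^{\binom{u+p-1}{p-1}}$. The right-hand side therefore equals
\[
\binom{(r+s)n+p-1}{p-1}\,\ell_R\bigl(N/(I_1^rI_2^s)^nN\bigr).
\]

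Now read off leading terms. The binomial factor is $\frac{(r+s)^{p-1}n^{p-1}}{(p-1)!}+O(n^{p-2})$. The second factor is $\frac{e(I_1^rI_2^s;N)}{d!}n^d+O(n^{d-1})$, because $\dim N=d$. Multiplying by $q!$ and comparing coefficients of $n^q$ yields
\[
\widetilde e_{\mathrm{BR}}(E_1^rE_2^s;N)\leq \frac{q!}{(p-1)!\,d!}(r+s)^{p-1}e(I_1^rI_2^s;N)=\binom qd (r+s)^{p-1}e(I_1^rI_2^s;N),
\]
since $q-d=p-1$.

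It remains to show $e(I_1^rI_2^s;N)\leq (r\,e(I_1;N)^{1/d}+s\,e(I_2;N)^{1/d})^d$, which is Minkowski for ideals with coefficients in $N$. Corollary~\ref{cor:ideal-product} gives
\[
e(I_1^rI_2^s;N)=\sum_i\binom di e(I_1^{[d-i]},I_2^{[i]};N)r^{d-i}s^i.
\]
So it suffices to know Teissier's inequality \eqref{CM1}, $e(I_1^{[d-i]},I_2^{[i]};N)^d\leq e(I_1;N)^{d-i}e(I_2;N)^i$, for a module $N$ of dimension $d$. The binomial theorem then finishes.

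This last step is the main obstacle, since the introduction states \eqref{CM1} only for $N=R$. My first attempt would be to reduce to $N=R$-type statements via the associativity formula: $e(\cdot;N)=\sum_{\mathfrak p}\ell(N_{\mathfrak p})e(\cdot;R/\mathfrak p)$ over primes with $\dim R/\mathfrak p=d$, applied to mixed multiplicities as well. Rees--Sharp/Teissier then hold for each domain $R/\mathfrak p$, which is quasi-unmixed after completion, or via the general Rees--Sharp argument valid in any local ring. Summing the per-component inequalities requires Hölder's inequality (the reverse Minkowski for sums), which preserves
\[
\sum_{\mathfrak p} a_{\mathfrak p}^{(d-i)/d}b_{\mathfrak p}^{i/d}\leq \Bigl(\sum a_{\mathfrak p}\Bigr)^{(d-i)/d}\Bigl(\sum b_{\mathfrak p}\Bigr)^{i/d}.
\]
Alternatively, one can cite the known Minkowski inequality for $e(\,\cdot\,;N)$ directly. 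Edge cases where $e(I_j;N)$ or the mixed terms vanish are harmless, since all quantities are nonnegative.
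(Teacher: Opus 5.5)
Your proof is correct. Its main part is the paper's own argument. You use the same containment $(I_1^rI_2^s)^nS_{(r+s)n}\subseteq E_1^{rn}E_2^{sn}$, the same identification $T_u\cong N^{\binom{u+p-1}{p-1}}$, and the same leading-coefficient comparison giving $\widetilde e_{\mathrm{BR}}(E_1^rE_2^s;N)\leq\binom qd(r+s)^{p-1}e(I_1^rI_2^s;N)$. You also identify the left-hand side by Corollary~\ref{mixed}, as the paper does.

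You depart from the paper only in deriving Minkowski's inequality with coefficients in $N$.
\begin{itemize}
\item The paper applies the classical Minkowski inequality on each $R/\mathfrak p$ with $\dim R/\mathfrak p=d$. It then applies the weighted $\ell^d$ triangle inequality to the $d$-th roots.
\item You first prove the stronger coefficientwise bounds $e(I_1^{[d-i]},I_2^{[i]};N)\leq e(I_1;N)^{(d-i)/d}e(I_2;N)^{i/d}$. You get these from \eqref{CM1} on each $R/\mathfrak p$ and H\"older's inequality with exponents $d/(d-i)$ and $d/i$ (the weights $\ell(N_{\mathfrak p})$ split as $w^{(d-i)/d}w^{i/d}$). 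You then reassemble with Corollary~\ref{cor:ideal-product} and the binomial theorem.
\item You use associativity for mixed multiplicities without proof. It does hold: apply the ordinary associativity formula to $e(I_1^aI_2^b;N)$ for all $a,b\geq1$, then compare coefficients of the polynomials in $a,b$ given by Corollary~\ref{cor:ideal-product}.
\end{itemize}
The paper's route is a little shorter and needs only the Minkowski inequality per component. Yours costs one extra step but gives \eqref{CM1} with coefficients in $N$ along the way.

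One justification you offer is wrong. A Noetherian local domain need not be quasi-unmixed; Nagata's examples have non-equidimensional completions. This does no harm, because no unmixedness is needed here. The inequalities \eqref{CM1}, as recalled in the Introduction, hold in every Noetherian local ring; quasi-unmixedness enters only in the equality criterion. Alternatively, pass to $\widehat R$ first, where the relevant quotients are complete local domains.
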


\begin{proof}
The ideals $I_1,I_2$ are $\mathfrak m$-primary and satisfy
$I_jF\subseteq E_j$. Hence, writing
$S=\operatorname{Sym}_R(F)$, we obtain
\[
(I_1^rI_2^s)^nS_{(r+s)n}
\subseteq E_1^{rn}E_2^{sn}.
\]
By right exactness of tensor products,
\[
\begin{aligned}
&\ell_R\!\left(
(S_{(r+s)n}/E_1^{rn}E_2^{sn})\otimes_RN
\right)\leq
\binom{(r+s)n+p-1}{p-1}
\ell_R\!\left(N/(I_1^rI_2^s)^nN\right).
\end{aligned}
\]
Multiplying by $q!/n^q$ and taking limits yields
\[
\widetilde e_{\mathrm{BR}}(E_1^rE_2^s;N)
\leq
\binom qd(r+s)^{p-1}e(I_1^rI_2^s;N).
\]
The product formula of Corollary~\ref{mixed} gives
\[
\widetilde e_{\mathrm{BR}}(E_1^rE_2^s;N)
=
\sum_{i=0}^{q}\binom qi
e_{\mathrm{BR}}(E_1^{[q-i]},E_2^{[i]};N)
r^{q-i}s^i,
\]
proving the first inequality.

For the second inequality, apply Minkowski's inequality
for ideals with coefficients in $N$:
\[
e(I_1^rI_2^s;N)^{1/d}
\leq
r\,e(I_1;N)^{1/d}+s\,e(I_2;N)^{1/d}.
\]
For completeness, this module version follows from the
classical inequality \cite{RS,Verma}, associativity, and
H\"older's inequality. Indeed, let
\[
\Lambda=\{\mathfrak p\in\operatorname{Supp}_RN:
                 \dim(R/\mathfrak p)=d\},
\qquad
w_{\mathfrak p}=\ell_{R_{\mathfrak p}}(N_{\mathfrak p}),
\]
and put $a_{\mathfrak p}=e(I_1;R/\mathfrak p)^{1/d}$
and $b_{\mathfrak p}=e(I_2;R/\mathfrak p)^{1/d}$.
Associativity and the classical inequality give
\[
e(I_1^rI_2^s;N)^{1/d}
\leq
\left(
\sum_{\mathfrak p\in\Lambda}
w_{\mathfrak p}(ra_{\mathfrak p}+sb_{\mathfrak p})^d
\right)^{1/d}.
\]
The weighted $\ell^d$ triangle inequality bounds this by
\[
r\left(\sum_{\mathfrak p\in\Lambda}
w_{\mathfrak p}a_{\mathfrak p}^d\right)^{1/d}
+
s\left(\sum_{\mathfrak p\in\Lambda}
w_{\mathfrak p}b_{\mathfrak p}^d\right)^{1/d}
=
r\,e(I_1;N)^{1/d}+s\,e(I_2;N)^{1/d}.
\]
This proves the second inequality.
\end{proof}

\begin{rem}
For $p=1$, Proposition~\ref{prop:general-minkowski-br}
recovers the classical Minkowski inequality for ideals.
Indeed, identify $F$ with $R$. Then $E_1,E_2$ are
$\mathfrak m$-primary ideals,
$I_j=(E_j:_RR)=E_j$, and $q=d$.
Buchsbaum--Rim multiplicities become Hilbert--Samuel
multiplicities, so the proposition gives
\[
\sum_{i=0}^{d}\binom di
e(E_1^{[d-i]},E_2^{[i]};N)r^{d-i}s^i
\leq
\left(
r\,e(E_1;N)^{1/d}
+s\,e(E_2;N)^{1/d}
\right)^d.
\]
By the product formula for mixed multiplicities, the
left-hand side equals $e(E_1^rE_2^s;N)$. Therefore
\[
e(E_1^rE_2^s;N)^{1/d}
\leq
r\,e(E_1;N)^{1/d}+s\,e(E_2;N)^{1/d}.
\]
Taking $r=s=1$ yields
\[
e(E_1E_2;N)^{1/d}
\leq e(E_1;N)^{1/d}+e(E_2;N)^{1/d},
\]
which, for $N=R$, is the classical Minkowski inequality.
\end{rem}

\subsubsection{A comparison bound via the intersection}

The following result bounds the product multiplicity in terms of the
intersection of the two modules. It is a comparison estimate and does
not assert log-convexity.

\begin{prop}\label{prop_br_bounds}
Let $E_1,E_2\subseteq F$ have finite colength.
Set $K=E_1\cap E_2$. Then
\[
\begin{aligned}
\widetilde e_{\rm BR}(E_1E_2;N)
&=
\sum_{i=0}^{q}\binom{q}{i}
e_{\rm BR}\left(E_1^{[q-i]},E_2^{[i]};N\right)\\
&\leq 2^q e_{\rm BR}(K;N).
\end{aligned}
\]
In particular, if $E_1\subseteq E_2$ or $E_2\subseteq E_1$,
then
\[
\widetilde e_{\rm BR}(E_1E_2;N)
\leq
2^q\max\{e_{\rm BR}(E_1;N),e_{\rm BR}(E_2;N)\}.
\]
Equality holds in both estimates when $E_1=E_2$.
\end{prop}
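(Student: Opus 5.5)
The equality in the first display is exactly Lemma~\ref{lem:mixed-product-formula}, so the only content is the inequality $\widetilde e_{\rm BR}(E_1E_2;N)\leq 2^q e_{\rm BR}(K;N)$. I will prove it by comparing Hilbert functions directly. First, $K=E_1\cap E_2$ has finite colength in $F$: $F/K$ embeds in $F/E_1\oplus F/E_2$. Since $K\subseteq E_1$ and $K\subseteq E_2$, products inside $S$ give $K^2\subseteq E_1E_2\subseteq S_2$. Hence $K^{2n}=(K^2)^n\subseteq(E_1E_2)^n=E_1^nE_2^n$ inside $S_{2n}$. By right exactness of $-\otimes_RN$ (Remark~\ref{notation}), the quotient $(S_{2n}/E_1^nE_2^n)\otimes_RN$ is a quotient of $(S_{2n}/K^{2n})\otimes_RN$. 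Therefore
\[
\ell_R\bigl((S_{2n}/(E_1E_2)^n)\otimes_RN\bigr)\leq \ell_R\bigl((S_{2n}/K^{2n})\otimes_RN\bigr)\qquad(n\geq1).
\]

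Next I divide by $n^q/q!$ and let $n\to\infty$. The left side tends to $\widetilde e_{\rm BR}(E_1E_2;N)$ by Definition~\ref{def:br-degree}. The right side is the degree-one Buchsbaum--Rim function of $K$ evaluated at $2n$, so it equals $e_{\rm BR}(K;N)(2n)^q/q!+O(n^{q-1})$ and its normalized limit is $2^q e_{\rm BR}(K;N)$. Equivalently, Remark~\ref{remark1} with $r=2$ gives $\widetilde e_{\rm BR}(K^2;N)=2^q e_{\rm BR}(K;N)$. This proves the main inequality.

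For the nested case, say $E_1\subseteq E_2$, we have $K=E_1$. The bound becomes $2^q e_{\rm BR}(E_1;N)\leq 2^q\max\{e_{\rm BR}(E_1;N),e_{\rm BR}(E_2;N)\}$, and the case $E_2\subseteq E_1$ is symmetric. For the equality case $E_1=E_2=E$, we have $K=E$. Lemma~\ref{lem:mixed-br-basic-properties}(ii), whose proof works verbatim with coefficients in $N$ since Corollary~\ref{mixed} and Remark~\ref{remark1} are stated for $N$, gives every mixed coefficient equal to $e_{\rm BR}(E;N)$. The sum is then $\sum_i\binom qi e_{\rm BR}(E;N)=2^q e_{\rm BR}(E;N)$, which is also the maximum, so equality holds in both estimates.

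There is no real obstacle. The only points needing care are that containment of submodules passes to containment of the images after tensoring with $N$, which right exactness handles, and that Lemma~\ref{lem:mixed-br-basic-properties}(ii), stated for $N=R$, also holds with coefficients in $N$, which follows by rerunning its short argument.
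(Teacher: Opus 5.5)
Your proof is correct and follows essentially the paper's argument: the containment $K^{2n}\subseteq E_1^nE_2^n=(E_1E_2)^n$, the induced surjection after tensoring with $N$, and normalization by $q!/n^q$ to obtain $2^q e_{\rm BR}(K;N)$. The differences are cosmetic. In the nested case you use the trivial bound $e_{\rm BR}(E_1;N)\leq\max\{e_{\rm BR}(E_1;N),e_{\rm BR}(E_2;N)\}$, whereas the paper also records the unnecessary fact $e_{\rm BR}(E_1;N)\geq e_{\rm BR}(E_2;N)$. For the equality case you go through Lemma~\ref{lem:mixed-br-basic-properties}(ii) with coefficients in $N$, while the paper reads off $\widetilde e_{\rm BR}(E^2;N)=2^q e_{\rm BR}(E;N)$ directly, as in Remark~\ref{remark1}.
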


\begin{proof}
Write $S=\operatorname{Sym}_R(F)$.
The natural injection \(
F/K\longrightarrow F/E_1\oplus F/E_2
\)
shows that $K$ has finite colength in $F$. Since $K\subseteq E_1$ and $K\subseteq E_2$, for every
$n\geq1$ we have
\[
K^{2n}\subseteq E_1^nE_2^n=(E_1E_2)^n
\subseteq S_{2n}.
\]
Thus there is a surjection
\[
(S_{2n}/K^{2n})\otimes_R N
\twoheadrightarrow
(S_{2n}/E_1^nE_2^n)\otimes_R N.
\]
Taking lengths, multiplying by $q!/n^q$, and passing to
the limit yields
\[
\widetilde e_{\rm BR}(E_1E_2;N)
\leq 2^q e_{\rm BR}(K;N).
\]
The expression in terms of mixed multiplicities follows
from Lemma~\ref{lem:mixed-product-formula}.

If $E_1\subseteq E_2$, then $K=E_1$.
Moreover, the surjections
\[
(S_n/E_1^n)\otimes_R N
\twoheadrightarrow
(S_n/E_2^n)\otimes_R N
\]
give $e_{\rm BR}(E_1;N)\geq e_{\rm BR}(E_2;N)$.
Hence the asserted bound by the maximum follows.
The other inclusion is treated symmetrically.

Finally, if $E_1=E_2=E$, then $K=E$ and
\[
\widetilde e_{\rm BR}(E^2;N)
=2^q e_{\rm BR}(E;N),
\]
which proves the equality assertion.
\end{proof}

\section{Integral closure and the converse problem}\label{sec:integral-closure}

Motivated by the Rees--Sharp criterion for ideals recalled in the
Introduction, this section addresses Question~\ref{converse}.
We prove that the implication $\mathrm{(ii)}\Rightarrow\mathrm{(i)}$
holds, and we give counterexamples showing that the converse fails in
general, together with additional positive cases.

\subsection{Equal integral closures and mixed multiplicities}

\begin{prop}\label{prop:equal-closures-mixed-equalities}
Let $E_1,E_2\subsetneq F$ have finite colength and assume that they have the
same integral closure in $F$. Then 
\[
e_{\rm BR}\left(E_1^{[q-i]},E_2^{[i]}\right)
=e_{\rm BR}(E_1)=e_{\rm BR}(E_2)
\qquad(0\leq i\leq q).
\]
Moreover, regarding $E_1E_2$ as a submodule of
$\operatorname{Sym}_R^2(F)$, one has
\[
\widetilde e_{\rm BR}(E_1E_2)
=2^q e_{\rm BR}(E_1).
\]
In particular, the Minkowski-type equality holds:
\[
\widetilde e_{\rm BR}(E_1E_2)^{1/q}
=e_{\rm BR}(E_1)^{1/q}+e_{\rm BR}(E_2)^{1/q}.
\]
\end{prop}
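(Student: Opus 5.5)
The plan is to reduce everything to Lemma~\ref{lem:mixed-br-basic-properties} by passing through the common integral closure. Set $M=\overline{E_1}=\overline{E_2}$. Since $E_j\subseteq M$ and $M\subseteq\overline{E_j}$, the equivalence recalled in Section~\ref{sec:preliminaries} (with $U=E_j$, $E=M$) shows that each $E_j$ is a reduction of $M$. For this we need $M$ to be a finitely generated submodule of $F$ of finite colength. Finite colength is immediate from $E_1\subseteq M\subseteq F$. Finite generation holds because $M$ is a submodule of the finitely generated module $F$ over the Noetherian ring $R$. No finiteness of the integral closure of the Rees algebra is needed, because the definition of reduction only uses $E^{n+1}=UE^n$ for $n\gg0$, which follows from integrality of $\mathcal R(M)$ over $\mathcal R(E_j)$ as in the cited \cite[Theorem~16.2.3]{SH}.

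Next I will apply Lemma~\ref{lem:mixed-br-basic-properties}(iii) with $K_1=E_1$, $K_2=E_2$ and $M_1=M_2=M$. This gives
\[
e_{\rm BR}(E_1^{[q-i]},E_2^{[i]})=e_{\rm BR}(M^{[q-i]},M^{[i]})
\qquad(0\leq i\leq q).
\]
By Lemma~\ref{lem:mixed-br-basic-properties}(ii), the right-hand side equals $e_{\rm BR}(M)$ for every $i$. Specializing to $i=0$ and $i=q$ identifies this common value with $e_{\rm BR}(E_1)$ and with $e_{\rm BR}(E_2)$, respectively. This is consistent with the endpoint coefficients of the mixed polynomial being the ordinary Buchsbaum--Rim multiplicities, which is itself a consequence of (ii)--(iii), or alternatively of Lemma~\ref{Coro16.5.7SH}. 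The case $i=0$ of (iii) compares $E_1$ with $M$ directly, so no formal equidimensionality hypothesis is needed.

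For the product statement I will use Lemma~\ref{lem:mixed-product-formula} with $N=R$:
\[
\widetilde e_{\rm BR}(E_1E_2)=\sum_{i=0}^q\binom qi e_{\rm BR}(E_1)=2^q e_{\rm BR}(E_1).
\]
For the Minkowski-type equality, note that $e_{\rm BR}(E_1)=e_{\rm BR}(E_2)=:e$. Then $(e^{1/q}+e^{1/q})^q=2^qe$, and taking $q$-th roots gives the claim.

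The only point needing care is the first step: checking that the closure $M$ is a legitimate finite-colength submodule, and that each $E_j$ is a reduction of $M$ in the precise sense used in Lemma~\ref{lem:mixed-br-basic-properties}(iii). Everything after that is formal.
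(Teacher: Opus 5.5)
Your proposal is correct and follows the paper's proof essentially step for step. Both pass to $H=\overline{E_1}=\overline{E_2}$, check that it is a finitely generated finite-colength submodule so that each $E_j$ is a reduction of it, apply Lemma~\ref{lem:mixed-br-basic-properties}(iii) and then (ii), and finish with Lemma~\ref{lem:mixed-product-formula} and the binomial identity. Your side remark is a small improvement: it makes explicit that $e_{\rm BR}(E_1)=e_{\rm BR}(M)$ follows from (iii) with $K_1=K_2=E_1$ together with (ii), which justifies the endpoint identification $e_{\rm BR}(E_1^{[q]},E_2^{[0]})=e_{\rm BR}(E_1)$ that the paper uses without comment.
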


\begin{proof}
Let $H=\overline{E_1}=\overline{E_2}\subseteq F$.
Since $H$ is a submodule of the finite module $F$ over
the Noetherian ring $R$, it is finitely generated.
The inclusion $E_1\subseteq H$ induces a surjection
$F/E_1\twoheadrightarrow F/H$, so $H$ has finite colength.
Furthermore, $E_j\subseteq H=\overline{E_j}$ for $j=1,2$;
and the reduction criterion in Section~\ref{sec:preliminaries} then shows that both $E_1$ and $E_2$ are reductions of $H$.

By invariance under reductions
(Lemma~\ref{lem:mixed-br-basic-properties}\textup{(iii)}) and the formula for
repeated modules (Lemma~\ref{lem:mixed-br-basic-properties}\textup{(ii)}), we obtain
\[
\begin{aligned}
e_{\rm BR}\left(E_1^{[q-i]},E_2^{[i]}\right)
&=e_{\rm BR}\left(H^{[q-i]},H^{[i]}\right)\\
&=e_{\rm BR}(H)
\qquad(0\leq i\leq q).
\end{aligned}
\]
Taking $i=0$ and $i=q$ gives
\[
e_{\rm BR}(E_1)=e_{\rm BR}(H)=e_{\rm BR}(E_2),
\]
which proves the first assertion.

Now Lemma~\ref{lem:mixed-product-formula} and the binomial identity give
\[
\begin{aligned}
\widetilde e_{\rm BR}(E_1E_2)
&=\sum_{i=0}^{q}\binom qi
e_{\rm BR}\left(E_1^{[q-i]},E_2^{[i]}\right)\\
&=e_{\rm BR}(E_1)\sum_{i=0}^{q}\binom qi\\
&=2^q e_{\rm BR}(E_1).
\end{aligned}
\]
Taking $q$-th roots and using
$e_{\rm BR}(E_1)=e_{\rm BR}(E_2)$ yields
\[
\widetilde e_{\rm BR}(E_1E_2)^{1/q}
=2e_{\rm BR}(E_1)^{1/q}
=e_{\rm BR}(E_1)^{1/q}+e_{\rm BR}(E_2)^{1/q}.
\]
\end{proof}

\subsection{A counterexample to the converse}

The following example shows that the converse implication in Question~\ref{converse} fails for modules over a one-dimensional ring.

\begin{ex}\label{ex:mixed-br-equality-distinct-closures}
{\rm Equality of all mixed Buchsbaum--Rim multiplicities does
not imply equality of integral closures, even over a
complete, reduced, Cohen--Macaulay local ring.

Let $k$ be an infinite field and set
\(
R=k[[x,y]]/(xy),\,\, \mathfrak m=(x,y),
\)
where $x,y$ also denote their residue classes in $R$.
The ring $R$ is complete, reduced, Cohen--Macaulay, and
equidimensional of dimension one; in particular, it is
quasi-unmixed. Consider
\[
I=(x,y^2),\qquad J=(x^2,y),\qquad
F=Rf_1\oplus Rf_2,
\]
and set $E_1=IF$ and $E_2=JF$.
Both modules have finite colength in $F$, and
$q=\dim R+\operatorname{rank}F-1=2$.

For integers $a,b\geq1$, the relation $xy=0$ gives
\(
I^aJ^b=(x^{a+2b},y^{2a+b}).
\)
Since
\[
\ell_R\bigl(R/(x^u,y^v)\bigr)=u+v-1
\qquad(u,v\geq1),
\]
we obtain $\ell_R(R/I^aJ^b)=3(a+b)-1$.
Writing $S=\operatorname{Sym}_R(F)=R[t_1,t_2]$, we have
\[
E_1^aE_2^b=I^aJ^bS_{a+b}.
\]
The module $S_{a+b}$ is free of rank $a+b+1$, so
\[
\begin{aligned}
\ell_R(S_{a+b}/E_1^aE_2^b)
&=(a+b+1)\ell_R(R/I^aJ^b)\\
&=(a+b+1)\bigl(3(a+b)-1\bigr)\\
&=3a^2+6ab+3b^2+2a+2b-1.
\end{aligned}
\]
Comparing its quadratic terms with
\[
\frac{e_{\rm BR}(E_1)}{2}a^2
+e_{\rm BR}(E_1^{[1]},E_2^{[1]})ab
+\frac{e_{\rm BR}(E_2)}{2}b^2
\]
yields
\[
e_{\rm BR}(E_1)
=e_{\rm BR}(E_1^{[1]},E_2^{[1]})
=e_{\rm BR}(E_2)=6.
\]

Nevertheless, $\overline{E_1}\neq\overline{E_2}$ in $F$.
Indeed, $xf_1\in E_1$. Suppose that
$xf_1\in\overline{E_2}$. Identifying $f_1,f_2$ with
$t_1,t_2$, respectively, there would exist an equation
\[
(xt_1)^n+a_1(xt_1)^{n-1}+\cdots+a_n=0,
\qquad a_j\in E_2^j,
\]
for some $n\geq1$.
Apply the homomorphism
\[
\varphi:R[t_1,t_2]\longrightarrow k[[x]],
\qquad
x\longmapsto x,\quad y\longmapsto0,\quad
t_1\longmapsto1,\quad t_2\longmapsto0.
\]
Since $\varphi(E_2)\subseteq(x^2)$, this gives
\[
x^n+\varphi(a_1)x^{n-1}+\cdots+\varphi(a_n)=0,
\qquad \varphi(a_j)\in(x^{2j}).
\]
Thus $x$ would be integral over $(x^2)$ in the discrete
valuation ring $k[[x]]$. This is impossible because
$(x^2)$ is integrally closed and $x\notin(x^2)$.
Therefore
\(
xf_1\in\overline{E_1}\setminus\overline{E_2}.
\)
This disproves the implication
\textup{(i)}$\Rightarrow$\textup{(ii)}
in Question~\ref{converse}.}
\end{ex}

\subsection{A positive converse (\ref{converse}) for modules induced by ideals}

We now prove the converse under the additional assumption that the integral closures of the modules are induced by ideals.

\begin{prop}\label{prop:converse-scalar-integral-closures}
Assume in addition that $R$ is quasi-unmixed and $d\geq2$.
Let $E_1,E_2\subsetneq F$ be submodules of finite colength.
Assume that there exist $\mathfrak m$-primary ideals $I,J$
such that
\(
\overline{E_1}=\overline{IF}
\,\,\text{and}\,\,
\overline{E_2}=\overline{JF},
\)
where integral closures are taken in $F$.
Then the following conditions are equivalent:
\begin{enumerate}
\item[\textup{(i)}]
For every $0\leq i\leq q$,
\(
e_{\rm BR}\left(E_1^{[q-i]},E_2^{[i]}\right)
=e_{\rm BR}(E_1)=e_{\rm BR}(E_2).
\)
\item[\textup{(ii)}]
$\overline{E_1}=\overline{E_2}$.
\end{enumerate}
In particular, this equivalence holds when $E_1=IF$
and $E_2=JF$.
\end{prop}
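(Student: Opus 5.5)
The implication (ii)$\Rightarrow$(i) is exactly Proposition~\ref{prop:equal-closures-mixed-equalities}, so the work is in (i)$\Rightarrow$(ii). The plan is in three steps: reduce to $E_1=IF$ and $E_2=JF$; compute the mixed Buchsbaum--Rim multiplicities of $IF,JF$ from the ideal mixed multiplicities of $I,J$; then apply the Teissier--Rees--Sharp--Katz criterion recalled in the Introduction.

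\emph{Reduction.} Put $H_1=\overline{E_1}=\overline{IF}$, which is finitely generated and of finite colength. Since $E_1\subseteq H_1=\overline{E_1}$, the reduction criterion of Section~\ref{sec:preliminaries} shows that $E_1$ is a reduction of $H_1$. Likewise $IF\subseteq\overline{IF}=H_1$ shows that $IF$ is a reduction of $H_1$, and the same holds for $E_2$, $JF$ and $H_2=\overline{E_2}$. Lemma~\ref{lem:mixed-br-basic-properties}(iii), applied twice, then gives
\[
e_{\rm BR}(E_1^{[q-i]},E_2^{[i]})=e_{\rm BR}((IF)^{[q-i]},(JF)^{[i]})
\qquad(0\le i\le q).
\]
So hypothesis (i) transfers to $IF,JF$, and (ii) only concerns $\overline{IF}$ and $\overline{JF}$. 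The ideals $I,J$ are proper, because $E_j\subsetneq F$ forces $\overline{E_j}\ne F$.

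\emph{Computation.} Since $(IF)^a(JF)^b=I^aJ^bS_{a+b}$ and $S_{a+b}$ is free of rank $\binom{a+b+p-1}{p-1}$,
\[
\ell_R(S_{a+b}/(IF)^a(JF)^b)=\binom{a+b+p-1}{p-1}\,\ell_R(R/I^aJ^b).
\]
The degree-$q$ homogeneous part of this is
\[
\frac{(a+b)^{p-1}}{(p-1)!}\cdot\frac{P(a,b)}{d!},
\qquad
P(a,b)=\sum_{j=0}^d\binom dj e_j\,a^{d-j}b^j,
\quad e_j=e(I^{[d-j]},J^{[j]}).
\]
Hypothesis (i) says this part equals $\frac{c}{q!}(a+b)^q$ with $c=e_{\rm BR}(E_1)$. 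Cancelling the nonzero factor $(a+b)^{p-1}$ in the domain $\mathbb Q[a,b]$ gives $P(a,b)=c'(a+b)^d$ for a constant $c'$. Comparing coefficients yields $e_0=e_1=\cdots=e_d$.

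\emph{Conclusion.} Since $R$ is quasi-unmixed and $d\ge2$, the criterion recalled in the Introduction (with all ratios $e_i/e_{i-1}=1$) gives $\overline I=\overline J$. To lift this to modules: if $x\in J$, then $x\in\overline I$ satisfies $x^n+c_1x^{n-1}+\cdots+c_n=0$ with $c_k\in I^k$. Multiplying by $t^n$ for any $t\in F$ shows that $xt$ is integral over $\mathcal R(IF)$, since $c_kt^k\in(IF)^k$. Hence $JF\subseteq\overline{IF}$, and symmetrically $IF\subseteq\overline{JF}$. Therefore $\overline{E_2}=\overline{JF}=\overline{IF}=\overline{E_1}$.

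The step I expect to need the most care is the leading-term bookkeeping in the computation, namely identifying the degree-$q$ part as the product of the two leading parts and justifying the cancellation. This is routine once the factorization is written down. Everything else is a direct application of earlier results.
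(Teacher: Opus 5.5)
Your proposal is correct and follows the paper's proof. You transfer (i) to $IF,JF$ by reduction invariance, factor the length as $\binom{a+b+p-1}{p-1}\ell_R(R/I^aJ^b)$, cancel $(X+Y)^{p-1}$ to get $e(I^{[d-j]},J^{[j]})$ constant in $j$, invoke the Teissier--Rees--Sharp--Katz criterion to get $\overline I=\overline J$, and lift to $F$.

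The remaining differences are cosmetic. The paper passes through the Minkowski equality $e(IJ)=2^de(I)$ and Verma's Theorem~1.16 to obtain $\overline{I^r}=\overline{J^s}$, then forces $r=s$ by comparing multiplicities, whereas you apply the ratio form of the criterion with $r=s=1$ directly. The paper also lifts to modules by showing that $IF$ and $JF$ are reductions of $(I+J)F$, rather than by your explicit integral equations.
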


\begin{proof}
The implication \textup{(ii)} $\Rightarrow$ \textup{(i)}
follows from Proposition~\ref{prop:equal-closures-mixed-equalities}.

Conversely, assume \textup{(i)}, and write
$A=e_{\rm BR}(E_1)=e_{\rm BR}(E_2)$.
By invariance under integral closure,
\[
e_{\rm BR}\left((IF)^{[q-i]},(JF)^{[i]}\right)=A
\qquad(0\leq i\leq q).
\]
Let $S=\operatorname{Sym}_R(F)$. Since
\(
(IF)^a(JF)^b=I^aJ^bS_{a+b},
\)
we have
\[
\ell_R\left(S_{a+b}/(IF)^a(JF)^b\right)
=
\binom{a+b+p-1}{p-1}\ell_R(R/I^aJ^b).
\]
Comparing the homogeneous components of total degree $q$
in the corresponding Hilbert polynomials gives
\[
\frac{A}{q!}(X+Y)^q
=
\frac{(X+Y)^{p-1}}{(p-1)!\,d!}
\sum_{j=0}^{d}\binom dj
e\left(I^{[d-j]},J^{[j]}\right)X^{d-j}Y^j.
\]
Canceling $(X+Y)^{p-1}$ and comparing coefficients, we obtain
\[
e\left(I^{[d-j]},J^{[j]}\right)
=\frac{A}{\binom qd}
=e(I)=e(J)
\qquad(0\leq j\leq d).
\]
In particular,
\[
e(IJ)=2^d e(I),
\]
and hence
\[
e(IJ)^{1/d}=e(I)^{1/d}+e(J)^{1/d}.
\]
Since $R$ is quasi-unmixed and $d\geq2$, the equality
criterion in \cite[Theorem~1.16]{Verma} yields positive
integers $r,s$ such that
$\overline{I^r}=\overline{J^s}$.
Taking multiplicities gives
\(
r^d e(I)=s^d e(J).
\)
Thus $r=s$, and consequently $\overline I=\overline J$.

To pass to modules, set $H=I+J$. The equality
$\overline I=\overline J$ implies that both $I$ and $J$
are reductions of $H$. If $H^{n+1}=IH^n$ for all
sufficiently large $n$, then
\[
(HF)^{n+1}
=H^{n+1}S_{n+1}
=IH^nS_{n+1}
=(IF)(HF)^n.
\]
Therefore, $IF$ is a reduction of $HF$; likewise, $JF$
is a reduction of $HF$. It follows that
\[
\overline{E_1}
=\overline{IF}
=\overline{HF}
=\overline{JF}
=\overline{E_2},
\]
as required.
\end{proof}

\begin{rem}
The hypothesis on the integral closures includes modules
that are not themselves of the form $IF$. For example,
the proposition applies whenever $E_1$ and $E_2$ are
reductions of $IF$ and $JF$, respectively.
No assumption on the cardinality of the residue field
is needed.
\end{rem}

\subsection{The converse problem in dimension at least two}

At present, the example over
$k[[x,y]]/(xy)$ only disproves the implication
\textup{(i)}$\Rightarrow$\textup{(ii)} when $d=1$.
The example above does not settle Question~\ref{converse}, which asks
whether the implication holds or fails in dimensions at least two.

\begin{question}
Assume in addition that $R$ is quasi-unmixed, $d\geq2$, and the
residue field is infinite. Let $E_1,E_2\subseteq F$ have finite
colength. If
\[
e_{\rm BR}(E_1^{[q-i]},E_2^{[i]})=c
\qquad\text{for every }0\leq i\leq q,
\]
must one have
\[
\overline{E_1}=\overline{E_2}?
\]
    
\end{question}

\subsection*{Acknowledgments.}
The author thanks Jugal K. Verma for valuable correspondence and
helpful discussions during the preparation of this work.  \subsection*{AI Declaration.}
This project was initiated several years ago, and the vast majority of its theoretical results were obtained without the aid of generative AI tools. More recently, AI technologies were utilized to assist with exposition, literature searches, statement simplification, and proof reviews. Specific concrete calculations were performed with the support of GPT-5.6 Sol and Codex, while Gemini was employed for drafting and polishing the manuscript. The author has thoroughly reviewed all AI-generated suggestions and assumes full responsibility for the final text and mathematical content.

\end{document}